\documentclass[11pt,leqno]{article}
\usepackage[doc]{optional}
\usepackage{color}
\usepackage{float}
\usepackage{soul}
\usepackage{graphicx}
\definecolor{labelkey}{rgb}{0,0.08,0.45}
\definecolor{refkey}{rgb}{0,0.6,0.0}
\definecolor{Brown}{rgb}{0.45,0.0,0.05}
\definecolor{lime}{rgb}{0.00,0.8,0.0}
\definecolor{lblue}{rgb}{0.5,0.5,0.99}

\usepackage{mathpazo}




\usepackage{amsmath}

\usepackage{stmaryrd}
\usepackage{amssymb}
\usepackage{theorem}
\oddsidemargin -0.1cm
\textwidth  16.5cm
\topmargin  -0.1cm
\headheight 0.0cm
\textheight 21.2cm
\parindent  4mm
\parskip    10pt 
\tolerance  3000

\newcommand{\nnn}{\ensuremath{{n\in{\mathbb N}}}}
\newcommand{\thalb}{\ensuremath{\tfrac{1}{2}}}
\newcommand{\menge}[2]{\big\{{#1}~\big |~{#2}\big\}}

\newcommand{\To}{\ensuremath{\rightrightarrows}}

\newcommand{\fenv}[1]%
{\ensuremath{\,\overrightarrow{\operatorname{env}}_{#1}}}
\newcommand{\benv}[1]%
{\ensuremath{\,\overleftarrow{\operatorname{env}}_{#1}}}

\newcommand{\scal}[2]{\left\langle{#1},{#2}  \right\rangle}

\newcommand{\zeroun}{\ensuremath{\left]0,1\right[}}
\newcommand{\RR}{\ensuremath{\mathbb R}}

\newcommand{\dom}{\ensuremath{\operatorname{dom}}}

\newcommand{\gr}{\ensuremath{\operatorname{gr}}}

\newcommand{\inte}{\ensuremath{\operatorname{int}}}

\newcommand{\ran}{\ensuremath{\operatorname{ran}}}

\newcommand{\epi}{\ensuremath{\operatorname{epi}}}

\newcommand{\Fix}{\ensuremath{\operatorname{Fix}}}
\newcommand{\Id}{\ensuremath{\operatorname{Id}}}
\newcommand{\bId}{\ensuremath{\operatorname{\mathbf{Id}}}}

\newcommand{\pinf}{\ensuremath{+\infty}}
\newcommand{\bb}{\ensuremath{\mathbf{b}}}
\newcommand{\bx}{\ensuremath{\mathbf{x}}}
\newcommand{\bX}{\ensuremath{{\mathbf{X}}}}
\newcommand{\bY}{\ensuremath{{\mathbf{Y}}}}
\newcommand{\bR}{\ensuremath{{\mathbf{R}}}}
\newcommand{\bM}{\ensuremath{{\mathbf{M}}}}
\newcommand{\bT}{\ensuremath{{\mathbf{T}}}}
\newcommand{\bzero}{\ensuremath{{\boldsymbol{0}}}}

\newcommand{\bL}{\ensuremath{{\mathbf{L}}}}

\newcommand{\bD}{\ensuremath{{\boldsymbol{\Delta}}}}

\newcommand{\bQ}{\ensuremath{{\mathbf{Q}}}}
\newcommand{\bA}{\ensuremath{{\mathbf{A}}}}

\newcommand{\bc}{\ensuremath{\mathbf{c}}}
\newcommand{\by}{\ensuremath{\mathbf{y}}}
\newcommand{\bz}{\ensuremath{\mathbf{z}}}


%
{\begin{list}{}{%
\settowidth{\labelwidth}{\textrm{#1~}}%
\setlength{\leftmargin}{\labelwidth+\labelsep}}}
{\end{list}}
\newtheorem{theorem}{Theorem}[section]

\newtheorem{corollary}[theorem]{Corollary}
\newtheorem{proposition}[theorem]{Proposition}
\newtheorem{definition}[theorem]{Definition}
\theoremstyle{plain}{\theorembodyfont{\rmfamily}
}
\theoremstyle{plain}{\theorembodyfont{\rmfamily}
}
\theoremstyle{plain}{\theorembodyfont{\rmfamily}
}
\theoremstyle{plain}{\theorembodyfont{\rmfamily}
\newtheorem{example}[theorem]{Example}}
\newtheorem{fact}[theorem]{Fact}
\theoremstyle{plain}{\theorembodyfont{\rmfamily}
\newtheorem{remark}[theorem]{Remark}}

\newcommand{\boxedeqn}[1]{%
    \[\fbox{%
        \addtolength{\linewidth}{-2\fboxsep}%
        \addtolength{\linewidth}{-2\fboxrule}%
        \begin{minipage}{\linewidth}%
        \begin{equation}#1\\[+5mm]\end{equation}%
        \end{minipage}%
      }\]%
  }

\allowdisplaybreaks 

\begin{document}

\title{\textsc{
Compositions and convex combinations
of asymptotically regular firmly nonexpansive
mappings are also asymptotically regular}}

\author{
Heinz H.\ Bauschke\thanks{
Mathematics, University
of British Columbia,
Kelowna, B.C.\ V1V~1V7, Canada. E-mail:
\texttt{heinz.bauschke@ubc.ca}.},
Victoria Mart\'{\i}n-M\'{a}rquez\thanks{
Departamento de An\'alisis Matem\'atico, Universidad de Sevilla, 
Apdo.~1160, 41080--Sevilla, Spain. E-mail:
\texttt{victoriam@us.es}.},
Sarah M.\ Moffat\thanks{
Mathematics,
University of British Columbia,
Kelowna, B.C.\ V1V~1V7, Canada.
E-mail:  \texttt{sarah.moffat@ubc.ca}.},
~and Xianfu Wang\thanks{
Mathematics, University of
British Columbia,
Kelowna, B.C.\ V1V~1V7, Canada. E-mail:
\texttt{shawn.wang@ubc.ca}.}}

\date{December 21, 2011}
\maketitle

\vskip 8mm

\begin{abstract} \noindent
Because of Minty's classical correspondence between
firmly nonexpansive mappings and maximally monotone operators, 
the notion of a firmly nonexpansive mapping has proven to be of 
basic importance
in fixed point theory, monotone operator theory, and convex optimization.
In this note, we show that if finitely many firmly nonexpansive mappings
defined on a real Hilbert space are given and each of these mappings
is asymptotically regular, which is equivalent to saying that
they have or ``almost have'' fixed points,
then the same is true for their composition. 
This significantly generalizes the result by Bauschke from 2003 
for the case of projectors (nearest point mappings). 
The proof resides in a Hilbert product space and it relies upon
the Brezis-Haraux range approximation result. 
By working in a suitably scaled Hilbert product space,
we also establish the asymptotic regularity of convex combinations.
\end{abstract}

{\small
\noindent
{\bfseries 2010 Mathematics Subject Classification:}
{Primary 47H05, 47H09; Secondary 47H10, 90C25. 
}

\noindent {\bfseries Keywords:}
Asymptotic regularity, 
firmly nonexpansive mapping,
Hilbert space,
maximally monotone operator,
nonexpansive mapping,
resolvent,
strongly nonexpansive mapping. 
}

\section{Introduction and Standing Assumptions}

Throughout this paper,
\boxedeqn{
\text{$X$ is
a real Hilbert space with inner
product $\scal{\cdot}{\cdot}$ }
}
and induced norm $\|\cdot\|$.
We assume that 
\boxedeqn{
\text{$m\in\{2,3,4,\ldots\}$ and $I := \{1,2,\ldots,m\}$. }
}
Recall that an operator $T\colon X\to X$ is 
\emph{firmly nonexpansive} (see, e.g., \cite{BC2011}, \cite{GK}, and
\cite{GR} for further information)
if $(\forall x\in X)(\forall y\in X)$ 
$\|Tx-Ty\|^2\leq \scal{x-y}{Tx-Ty}$ and
that a set-valued operator $A\colon X\To X$ is 
\emph{maximally monotone} if it is \emph{monotone}, i.e., 
for all $(x,x^*)$ and $(y,y^*)$ in the graph of $A$, we have
$\scal{x-y}{x^*-y^*}\geq 0$ and if the graph of $A$ cannot be properly
enlarged without destroying monotonicity. 
(We shall write $\dom A = \menge{x\in X}{Ax\neq\varnothing}$
for the \emph{domain} of $A$, $\ran A = A(X) = \bigcup_{x\in X}Ax$ for
the \emph{range} of $A$, and $\gr A$ for the \emph{graph} of $A$.)
These notions are equivalent (see \cite{Minty} and \cite{EckBer}) 
in the sense that 
if $A$ is maximally monotone, then
its \emph{resolvent} $J_A := (\Id+A)^{-1}$ is
firmly nonexpansive, and if $T$ is firmly nonexpansive,
then $T^{-1}-\Id$ is maximally monotone. 
(Here and elsewhere, $\Id$ denotes the identity operator on $X$.)
The Minty parametrization (see \cite{Minty} and also
\cite[Remark~23.22(ii)]{BC2011}) 
states that if $A$ is maximally monotone, then 
\begin{equation}
\label{e:MintPar}
\gr A = \menge{(J_Ax,x-J_Ax)}{x\in X}.
\end{equation}
In optimization, one main problem is to find zeros of maximally monotone
operators --- these zeros may correspond to critical points or solutions to
optimization problems. In terms of resolvents, the corresponding problem is
that of finding fixed points. 
For background material in fixed point theory and monotone operator
theory, we refer the reader to 
\cite{BC2011},
\cite{BorVanBook}, 
\cite{Brezis},
\cite{BurIus},
\cite{GK},
\cite{GR}, 
\cite{Rock70},
\cite{Rock98},
\cite{Simons1},
\cite{Simons2},
\cite{Zalinescu},
\cite{Zeidler2a},
\cite{Zeidler2b},
and \cite{Zeidler1}.

\emph{The aim of this note is to provide approximate fixed point
results for 
compositions and convex combinations of finitely many firmly nonexpansive
operators}. 

The first main result (Theorem~\ref{t:Main}) substantially extends a result by
Bauschke \cite{B2003} on the compositions of projectors to the composition of firmly
nonexpansive mappings. 
The second main result (Theorem~\ref{t:haupt}) extends a result by
Bauschke, Moffat and Wang \cite{BMWnear} on the convex combination of firmly nonexpansive
operators from Euclidean to Hilbert space.

The remainder of this section provides the standing assumptions used
throughout the paper. 

Even though the main results are formulated in the given Hilbert space $X$, 
it will turn out
that the key space to work in is the product space
\boxedeqn{
X^m := \menge{\bx = (x_i)_{i\in I}}{(\forall i\in I)\; x_i\in X}. 
}
This product space contains an embedding of the original space $X$ via
the \emph{diagonal} subspace
\boxedeqn{
\bD := \menge{\bx= (x)_{i\in I}}{x\in X}.
}
We also assume that we are 
given $m$ firmly nonexpansive operators $T_1,\ldots,T_m$;
equivalently, $m$ resolvents of maximally monotone operators
$A_1,\ldots,A_m$: 
\boxedeqn{
(\forall i\in I)\quad T_i = J_{A_i} = (\Id+A_i)^{-1}
\;\;\text{is firmly nonexpansive.}
}
We now define various pertinent operators acting on $X^m$.
We start with the Cartesian product operators
\boxedeqn{
\bT\colon X^m\to X^m\colon (x_i)_{i\in I}\mapsto (T_ix_i)_{i\in I}
}
and 
\boxedeqn{
\bA\colon X^m\To X^m\colon (x_i)_{i\in I}\mapsto (A_ix_i)_{i\in I}. 
}
Denoting the identity on $X^m$ by $\bId$, we observe that
\begin{equation}
J_{\bA} = (\bId + \bA)^{-1} = T_1\times\cdots \times T_m = \bT. 
\end{equation}
Of central importance will be the
\emph{cyclic right-shift operator}
\boxedeqn{
\bR\colon X^m\to X^m\colon (x_1,x_2,\ldots,x_m)\mapsto
(x_m,x_1,\ldots,x_{m-1}) 
}
and for convenience we set 
\boxedeqn{
\bM = \bId-\bR. 
}
We also fix strictly positive \emph{convex coefficients} (or weights)
$(\lambda_i)_{i\in I}$, i.e.
\boxedeqn{
\label{e:weights}
(\forall i\in I)\quad \lambda_i\in\zeroun 
\;\;\text{and}\;\;\sum_{i\in I}\lambda_i=1. 
}
Let us make $X^m$ into the Hilbert product space
\boxedeqn{
\bX := X^m, \quad\text{with}\;\;
\scal{\bx}{\by} = \sum_{i\in I}\scal{x_i}{y_i}. 
}
The orthogonal complement of $\bD$
with respect to this standard inner product is known 
(see, e.g., \cite[Proposition~25.4(i)]{BC2011}) to be 
\begin{equation}
\bD^\perp = \menge{\bx=(x_i)_{i\in I}}{\sum_{i\in I}x_i = 0}.
\end{equation}
Finally, given a nonempty closed convex subset $C$ of $X$, the 
\emph{projector} (nearest point mapping) onto $C$ is denoted by $P_C$. 
It is well known to be firmly nonexpansive.

\section{Properties of the Operator $\bM$}

In this section, we collect several useful properties of the operator
$\bM$, including its Moore-Penrose inverse 
(see \cite{Groetsch} and e.g.\ \cite[Section~3.2]{BC2011} 
for further information.). To that end, the following
result---which is probably part of the folklore---will 
turn out to be useful.

\begin{proposition}
\label{p:pinv}
Let $Y$ be a real Hilbert space
and let $B$ be a continuous linear operator from $X$ to $Y$
with adjoint $B^*$ and 
such that $\ran B$ is closed. 
Then the Moore-Penrose inverse of $B$ satisfies
\begin{equation}
B^\dagger = P_{\ran B^*}\circ B^{-1}\circ P_{\ran B}.
\end{equation}
\end{proposition}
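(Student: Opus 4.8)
The plan is to work from the standard characterization of the Moore-Penrose inverse as the minimal-norm least-squares solution operator. Concretely, for each $y\in Y$ the element $B^\dagger y$ is the unique point of smallest norm in the (nonempty, closed, affine) set $\argmin_{x\in X}\|Bx-y\|$. First I would recall that this set is precisely $\menge{x\in X}{Bx = P_{\ran B}\,y}$: since $\ran B$ is closed, $P_{\ran B}\,y$ is the nearest point of $\ran B$ to $y$, and $\|Bx-y\|$ is minimized exactly when $Bx$ equals this nearest point. Thus $\argmin_{x}\|Bx-y\| = B^{-1}(P_{\ran B}\,y)$, where $B^{-1}$ is read as the set-valued inverse; this set is nonempty precisely because $P_{\ran B}\,y\in\ran B$.

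Next I would bring in the orthogonality relations. For any bounded linear $B$ one has $\ker B = (\ran B^*)^\perp$, hence $(\ker B)^\perp = \closu(\ran B^*)$; and the closed range theorem guarantees that closedness of $\ran B$ forces $\ran B^*$ to be closed as well, so in fact $(\ker B)^\perp = \ran B^*$. The fiber $B^{-1}(P_{\ran B}\,y)$ is a translate $x_0 + \ker B$ of the kernel, where $x_0$ is any particular solution of $Bx = P_{\ran B}\,y$. Its minimal-norm element is obtained by projecting onto $(\ker B)^\perp$, that is, it equals $P_{(\ker B)^\perp}\,x_0 = P_{\ran B^*}\,x_0$. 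Crucially, this value does not depend on the choice of $x_0$, because any two admissible choices differ by an element of $\ker B$, which $P_{\ran B^*}$ annihilates.

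Assembling the pieces yields $B^\dagger y = P_{\ran B^*}\bigl(B^{-1}(P_{\ran B}\,y)\bigr)$ for every $y\in Y$, which is the claimed identity. The one point requiring care---and the only real obstacle---is the interpretation of the middle factor $B^{-1}$: it must be understood as the set-valued inverse, and the substance of the argument is precisely that post-composing with $P_{\ran B^*}$ collapses each fiber $x_0+\ker B$ to the single well-defined point $P_{\ran B^*}\,x_0$, thereby making the overall composition single-valued. Once this well-definedness is verified, the remainder is a direct application of the projection characterization of minimal-norm solutions together with the closed-range identity $(\ker B)^\perp = \ran B^*$.
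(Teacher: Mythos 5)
Your proof is correct and follows essentially the same route as the paper's: both identify the least-squares solution set as $B^{-1}(P_{\ran B}\,y)$, invoke the closed range theorem to get $(\ker B)^\perp = \ran B^*$, and collapse the fiber $x_0 + \ker B$ to a single point by projecting onto $\ran B^*$. The only cosmetic difference lies in the last step: the paper cites the property $\ran B^\dagger = \ran B^*$ to conclude $P_{\ran B^*}B^\dagger y = B^\dagger y$, whereas you use the minimal-norm characterization of $B^\dagger y$ together with the fact that the minimal-norm element of $x_0 + \ker B$ is $P_{(\ker B)^\perp}x_0$ --- both standard, equivalent properties of the Moore--Penrose inverse.
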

\begin{proof}
Take $y\in Y$. Define the corresponding set of least squares solutions
(see, e.g., \cite[Proposition~3.25]{BC2011}) 
by $C := B^{-1}(P_{\ran B}y)$. Since $\ran B$ is closed,
so is $\ran B^*$ (see, e.g., \cite[Corollary~15.34]{BC2011});
hence\footnote{$\ker B = B^{-1}0 = \menge{x\in X}{Bx=0}$ denotes the
\emph{kernel} (or nullspace) of $B$.}, 
$U := (\ker B)^\perp = \overline{\ran B^*} = \ran B^*$. 
Thus, $C = B^\dagger y + \ker B =  B^\dagger y + U^\perp$. 
Therefore, 
since $\ran B^\dagger = \ran B^*$ (see, e.g.,
\cite[Proposition~3.28(v)]{BC2011}), 
$P_U(C) = P_U B^\dagger y = B^\dagger y$, as claimed.
\end{proof}

Before we present various useful properties of $\bM$, let us recall the
notion of a \emph{rectangular} (which is also known
as star or 3* monotone, see \cite{BH}) operator.
A monotone operator $B\colon X\To X$ is \emph{rectangular} if 
$(\forall(x,y^*)\in\dom B\times\ran B)$
$\sup_{(z,z^*)\in\gr B} \scal{x-z}{z^*-y^*}<\pinf$.

\begin{theorem}
\label{t:M}
Define\,\footnote{Here and elsewhere we write $S^n$ for the
$n$-fold composition of an operator $S$.}
\begin{equation}
\label{e:bL}
\bL\colon \bD^\perp \to \bX 
\colon \by \mapsto  \sum_{i=1}^{m-1}\frac{m-i}{m}\bR^{i-1}\by.
\end{equation}
Then the  following hold. 
\begin{enumerate}
\item
\label{t:M1}
$\bM$ is continuous, linear, and maximally monotone with 
$\dom\bM = \bX$.
\item
\label{t:M2}
$\bM$ is rectangular. 
\item 
\label{t:M3}
$\ker\bM = \ker \bM^* = \bD$. 
\item 
\label{t:M4}
$\ran\bM = \ran\bM^* = \bD^\perp$ is closed. 
\item 
\label{t:M4+}
$\ran\bL = \bD^\perp$. 
\item
\label{t:M5}
$\bM\circ \bL = \bId|_{\bD^\perp}$. 
\item 
\label{t:M6}
$\displaystyle \bM^{-1}\colon \bX\To\bX\colon
\by\mapsto 
\begin{cases}
\bL\by + \bD, &\text{if $\by\in\bD^\perp$;}\\
\varnothing, &\text{otherwise.}
\end{cases} 
$
\item
\label{t:M7}
$\bM^\dagger = P_{\bD^\perp} \circ \bL \circ P_{\bD^\perp}=
\bL\circ P_{\bD^\perp}$. 
\item
\label{t:M7+}
$\displaystyle
\bM^\dagger = \sum_{k=1}^{m}\frac{m-(2k-1)}{2m}\bR^{k-1}.$
\end{enumerate}
\end{theorem}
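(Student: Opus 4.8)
The plan is to treat the nine assertions in a dependency-respecting order, doing the two genuine computations first and deducing the structural statements from them. The operator $\bR$ is a permutation of the $m$ factors, hence a surjective isometry with $\bR^* = \bR^{-1} = \bR^{m-1}$ and $\bR^m = \bId$; this is essentially the only structural fact I will use repeatedly. For~(i), linearity and continuity of $\bM = \bId-\bR$ are immediate, $\dom\bM = \bX$ is clear, and monotonicity follows from $\scal{\bM\bx}{\bx} = \|\bx\|^2 - \scal{\bR\bx}{\bx} \ge \|\bx\|^2 - \|\bR\bx\|\,\|\bx\| = 0$ via Cauchy--Schwarz; a continuous monotone operator with full domain is automatically maximally monotone. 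For~(iii), $\bM\bx = \bzero \Leftrightarrow \bR\bx = \bx$, and the fixed points of the cyclic shift are exactly the constant tuples, so $\ker\bM = \bD$; the same computation with $\bR^*$ gives $\ker\bM^* = \bD$.

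The computational heart is assertion~(vi), $\bM\circ\bL = \bId|_{\bD^\perp}$. Expanding $\bM\bL = (\bId-\bR)\sum_{i=1}^{m-1}\frac{m-i}{m}\bR^{i-1}$ telescopes, and I expect to land on $\bM\bL\by = \by - \frac1m\sum_{k=0}^{m-1}\bR^k\by$ for $\by\in\bD^\perp$. Recognizing the averaging operator $\frac1m\sum_{k=0}^{m-1}\bR^k = P_{\bD}$ (each output coordinate is the mean of the input coordinates) rewrites the right-hand side as $P_{\bD^\perp}\by = \by$, proving~(vi); moreover, since $\bR$ preserves $\bD^\perp$, so does $\bL$, giving $\ran\bL\subseteq\bD^\perp$. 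From here the structural parts fall out. For~(iv), combine $\overline{\ran\bM} = (\ker\bM^*)^\perp = \bD^\perp$ with $\bD^\perp = \ran(\bM\circ\bL)\subseteq\ran\bM$ to get $\ran\bM = \bD^\perp$ closed; the statements for $\bM^*$ follow since $\ran\bM$ is closed iff $\ran\bM^*$ is, and $\ran\bM^* = (\ker\bM)^\perp = \bD^\perp$. Then $\bM|_{\bD^\perp}$ is a bijection of $\bD^\perp$ whose two-sided inverse is $\bL$ by~(vi), giving $\ran\bL = \bD^\perp$, which is~(v); and~(vii) is read off from $\bM^{-1}\by = \bL\by + \ker\bM = \bL\by + \bD$ when $\by\in\ran\bM = \bD^\perp$ and $\bM^{-1}\by = \varnothing$ otherwise.

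The main obstacle is rectangularity,~(ii). Fixing $(\bx,\by^*)\in\dom\bM\times\ran\bM$ and writing $\by^* = \bM\mathbf{w}$, the substitution $\mathbf{u} = \bz-\mathbf{w}$ turns $\sup_{(\bz,\bz^*)\in\gr\bM}\scal{\bx-\bz}{\bz^*-\by^*}$ into $\sup_{\mathbf{u}}\big(\scal{\bM^*(\bx-\mathbf{w})}{\mathbf{u}} - q(\mathbf{u})\big)$, where $q(\mathbf{u}) := \scal{\mathbf{u}}{\bM\mathbf{u}}\ge 0$. Decomposing $\mathbf{u}$ along $\bD\oplus\bD^\perp$ and using that both $\bM$ and $\bM^*$ annihilate $\bD$, both the linear term and $q$ depend only on the $\bD^\perp$-component of $\mathbf{u}$, so the supremum reduces to one over $\mathbf{p}\in\bD^\perp$. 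The crux is then a coercivity estimate on $\bD^\perp$: $q(\mathbf{p}) = \scal{\mathbf{p}}{\tfrac12(\bM+\bM^*)\mathbf{p}}\ge c\|\mathbf{p}\|^2$ with $c>0$. I will justify this by a circulant diagonalization: $\tfrac12(\bM+\bM^*) = \bId-\tfrac12(\bR+\bR^*)$ acts blockwise and has eigenvalues $1-\cos(2\pi k/m) = 2\sin^2(\pi k/m)$ for $k=0,\dots,m-1$, the only zero occurring in the diagonal direction $k=0$; hence its restriction to $\bD^\perp$ is bounded below by $2\sin^2(\pi/m)>0$. Coercivity then makes $\scal{\bM^*(\bx-\mathbf{w})}{\mathbf{p}} - q(\mathbf{p})\le \|\bM\|\,\|\bx-\mathbf{w}\|\,\|\mathbf{p}\| - c\|\mathbf{p}\|^2$ bounded above in $\mathbf{p}$, so the supremum is finite, establishing~(ii).

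Finally,~(viii) follows from Proposition~\ref{p:pinv} applied to $B = \bM$: since $\ran\bM = \ran\bM^* = \bD^\perp$ and $\bM^{-1}$ has the fibres found in~(vii), the formula gives $\bM^\dagger = P_{\bD^\perp}\circ\bM^{-1}\circ P_{\bD^\perp}$, and applying $P_{\bD^\perp}$ to the affine fibre $\bL\mathbf{w}+\bD$ selects its unique representative $\bL\mathbf{w}\in\bD^\perp$; this yields $\bM^\dagger = \bL\circ P_{\bD^\perp}$, and $\ran\bL\subseteq\bD^\perp$ makes the outer projection redundant, giving both displayed expressions. For the closed form~(ix) I will evaluate $\bM^\dagger\by = \bL\big(\by - P_{\bD}\by\big)$ directly, using $\bR^kP_{\bD} = P_{\bD}$, the identity $\sum_{i=1}^{m-1}\frac{m-i}{m} = \frac{m-1}{2}$, and $P_{\bD} = \frac1m\sum_{k=0}^{m-1}\bR^k$; collecting the coefficient of $\bR^{k-1}$ then produces $\frac{m-(2k-1)}{2m}$, which is exactly~(ix).
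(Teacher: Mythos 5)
Your proposal is correct, and it is actually more self-contained than the paper's own proof in one important place. The biggest difference is part~\ref{t:M2}: the paper does not prove rectangularity at all, but simply cites \cite[Example~24.14]{BC2011} and Step~3 of the proof of Theorem~3.1 in \cite{B2003}, whereas you give a direct argument --- write $\by^*=\bM\mathbf{w}$, reduce the supremum $\sup_{\bz}\scal{\bx-\bz}{\bM\bz-\bM\mathbf{w}}$ to a supremum over $\mathbf{p}\in\bD^\perp$ using that $\bM$ and $\bM^*$ annihilate $\bD$ and map into $\bD^\perp$, and then bound it above via coercivity of the symmetric part $\bId-\tfrac12(\bR+\bR^*)$ on $\bD^\perp$ (smallest nonzero eigenvalue $1-\cos(2\pi/m)=2\sin^2(\pi/m)$). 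That coercivity estimate is sound; the only step to spell out in infinite dimensions is the ``blockwise'' diagonalization, which you can make rigorous by expanding the entries of $\mathbf{p}$ in an orthonormal basis of their finite-dimensional span, so that $\scal{\mathbf{p}}{\bM\mathbf{p}}$ splits into circulant quadratic forms on $\RR^m$, each restricted to the orthogonal complement of $(1,\ldots,1)$. The second difference is organizational: for \ref{t:M4}--\ref{t:M6} the paper works coordinatewise (it computes $x_i=\tfrac{m-1}{m}y_i+\cdots+\tfrac1m y_{i+2}$ and checks $x_i-x_{i-1}=y_i-\tfrac1m\sum_j y_j=y_i$), whereas you telescope at the operator level, $\bM\bL=\bId-\tfrac1m\sum_{k=0}^{m-1}\bR^k=P_{\bD^\perp}$ on $\bD^\perp$, obtain \ref{t:M4} from $\overline{\ran\bM}=(\ker\bM^*)^\perp$ together with surjectivity of $\bM\circ\bL$, and get \ref{t:M4+} from bijectivity of $\bM|_{\bD^\perp}$ rather than, as the paper does, from $\ran\bM^\dagger=\ran\bM^*$ after \ref{t:M7}. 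The algebra is the same, but your route isolates the reusable identity $\tfrac1m\sum_k\bR^k=P_{\bD}$ early and avoids index-wrapping bookkeeping; the paper's coordinate computation, in exchange, exhibits the preimage explicitly. Parts \ref{t:M1}, \ref{t:M3}, \ref{t:M7}, and \ref{t:M7+} of your proposal coincide in substance with the paper's proof (the paper justifies maximal monotonicity in \ref{t:M1} by citing \cite[Example~20.27]{BC2011}, you by Cauchy--Schwarz plus the standard fact that a continuous monotone operator with full domain is maximally monotone; both are fine).
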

\begin{proof}
\ref{t:M1}: 
Clearly, $\dom\bM=\bX$ and $(\forall \bx\in\bX)$ $\|\bR\bx\| = \|\bx\|$.
Thus, $\bR$ is nonexpansive 
and therefore $\bM =\bId-\bR$ is maximally monotone
(see, e.g., \cite[Example~20.27]{BC2011}). 

\ref{t:M2}: 
See \cite[Example~24.14]{BC2011} 
and \cite[Step~3 in the proof of Theorem~3.1]{B2003}
for two different proofs of the rectangularity of $\bM$. 

\ref{t:M3}: 
The definitions of $\bM$ and $\bR$ and the fact
that $\bR^*$ is the cyclic left shift operator readily imply
that $\ker\bM = \ker\bM^* = \bD$. 

\ref{t:M4}, \ref{t:M5}, and \ref{t:M6}: 
Let $\by = (y_1,\ldots,y_m)\in \bX$. 
Assume first that $\by \in\ran\bM$. 
Then there exists $\bx=(x_1,\ldots,x_m)$ such that
$y_1 = x_1-x_m$, $y_2=x_2-x_1$, \ldots, and 
$y_m = x_m - x_{m-1}$. 
It follows that $\sum_{i\in I} y_i = 0$, i.e.,
$\by\in \Delta^\bot$ by \cite[Proposition~25.4(i)]{BC2011}. 
Thus,
\begin{equation}
\label{e:1124a}
\ran\bM \subseteq \bD^\perp.
\end{equation}
Conversely, assume now that $\by\in\bD^\perp$. 
Now set
\begin{equation}
\bx := \bL\by = \sum_{i=1}^{m-1}\frac{m-i}{m}\bR^{i-1}\by.
\end{equation}
It will be notationally convenient to wrap indices around
i.e., $y_{m+1} = y_1$, $y_0=y_m$ and likewise.
We then get 
\begin{equation}
(\forall i\in I)\quad
x_i = \frac{m-1}{m}y_i + \frac{m-2}{m}y_{i-1}
+ \cdots + \frac{1}{m}y_{i+2}.
\end{equation}
Therefore,
\begin{equation}
\sum_{i\in I}x_i =
\frac{m-1}{m}\sum_{i\in I}y_i 
+ 
\frac{m-2}{m}\sum_{i\in I}y_i 
+ 
\frac{1}{m}\sum_{i\in I}y_i 
= \frac{m-1}{2}\sum_{i\in I}y_i = 0.
\end{equation}
Thus $\bx\in\bD^\perp$ and 
\begin{equation}
\label{e:poppadoms}
\ran\bL \subseteq \Delta^\perp.
\end{equation}
Furthermore,
\begin{subequations}
\begin{align}
(\forall i\in I)\quad
x_{i}-x_{i-1} &= \frac{m-1}{m}y_i - \frac{1}{m}y_{i-1}
-\frac{1}{m}y_{i-2}-\cdots - \frac{1}{m}y_{i+1}\\
&= y_i - \frac{1}{m}\sum_{j\in I} y_j = y_i.
\end{align}
\end{subequations}
Hence $\bM\bx = \bx-\bR\bx = \by$ and thus $\by\in\ran\bM$. 
Moreover, in view of \ref{t:M3}, 
\begin{equation}
\bM^{-1}\by = \bx + \ker\bM = \bx + \bD.
\end{equation}
We thus have shown
\begin{equation}
\label{e:1124b}
\bD^\perp \subseteq \ran \bM.
\end{equation}
Combining \eqref{e:1124a} and \eqref{e:1124b}, we obtain 
$\ran\bM = \bD^\perp$. 
We thus have verified \ref{t:M5}, and \ref{t:M6}. 
Since $\ran\bM$ is closed, so is 
$\ran\bM^*$ (by, e.g., \cite[Corollary~15.34]{BC2011}). 
Thus \ref{t:M4} holds.

\ref{t:M7}\&\ref{t:M4+}: 
We have seen in Proposition~\ref{p:pinv} that
\begin{equation}
\bM^\dagger = P_{\ran \bM^*} \circ \bM^{-1}\circ P_{\ran \bM}.
\end{equation}
Now let $\bz\in\bX$. 
Then, by \ref{t:M4}, $\by := P_{\ran \bM}\bz = P_{\bD^\perp}\bz \in
\bD^\perp$.
By \ref{t:M6}, $\bM^{-1}\by = \bL\by + \bD$. 
So $\bM^\dagger\bz = P_{\ran \bM^*}\bM^{-1}P_{\ran\bM}\bz = 
P_{\ran \bM^*}\bM^{-1}\by =
P_{\bD^\perp}(\bL\by + \bD) =
P_{\bD^\perp}\bL\by = \bL\by =(\bL\circ P_{\bD^\perp})\bz$ 
because $\ran\bL \subseteq \bD^\perp$ by \eqref{e:poppadoms}. 
Hence \ref{t:M7} holds. 
Furthermore, by \ref{t:M4} and e.g.~\cite[Proposition~3.28(v)]{BC2011}, 
$\ran\bL = \ran\bL\circ P_{\bD^\perp} = \ran \bM^\dagger = \ran\bM^* =
\bD^\perp$ and so \ref{t:M4+} holds. 

\ref{t:M7+}: 
Note that $P_{\bD^\perp} = \bId - P_{\bD}$
and that $P_{\bD} = m^{-1}\sum_{j\in I} \bR^j$. 
Hence 
\begin{equation}
P_{\bD^\perp} = \bId - \frac{1}{m}\sum_{j\in I}\bR^j.
\end{equation}
Thus, by \ref{t:M7} and \eqref{e:bL}, 
\begin{align}
\bM^\dagger &= \bL\circ P_{\bD^\perp} = 
\frac{1}{m}\sum_{i=1}^{m-1}(m-i)\bR^{i-1} \circ 
\Big(\bId- \frac{1}{m}\sum_{j\in I}\bR^j\Big)\\
&= \frac{1}{m}\sum_{i=1}^{m-1}(m-i)\bR^{i-1} -
\frac{1}{m^2}\sum_{i=1}^{m-1}(m-i)\sum_{j\in I}\bR^{i+j-1}. 
\end{align}
Re-arranging this expression in terms of powers of $\bR$ and
simplifying leads to
\begin{equation}
\bM^\dagger = (\bId-\bR)^\dagger =
\sum_{k=1}^{m}\frac{m-(2k-1)}{2m}\bR^{k-1}.
\end{equation}
\end{proof}

\begin{remark}
Suppose that $\widetilde{\bL}\colon\bD^\perp\to\bX$
satisfies $\bM\circ\widetilde{\bL} = \bId|_{\bD^\perp}$.
Then 
\begin{equation}
\bM^{-1}\colon \bX\To\bX\colon
\by\mapsto 
\begin{cases}
\widetilde{\bL}\by + \bD, &\text{if $\by\in\bD^\perp$;}\\
\varnothing, &\text{otherwise.}
\end{cases} 
\end{equation}
One may show that
$\bM^\dagger = P_{\bD^\perp}\circ \widetilde{\bL}\circ  P_{\bD^\perp}$
and that $P_{\bD^\perp}\circ\widetilde{\bL}=\bL$  (see \eqref{e:bL}).
Concrete choices for $\widetilde{\bL}$ and $\bL$ are
\begin{equation}
\bD^\perp \to \bX 
\colon (y_1,y_2,\ldots,y_m)\mapsto
(y_1,y_1+y_2,\ldots,y_1+y_2+y_3+\cdots+y_m);
\end{equation}
however, the range of the latter operator is not equal
$\bD^\perp$ whenever $X\neq\{0\}$. 
\end{remark}

\begin{remark}
Denoting the \emph{symmetric part} of 
$\bM$ by $\bM_+ = \thalb\bM+\thalb\bM^*$ and defining the 
\emph{quadratic form} associated with $\bM$ by 
$q_{\bM}\colon \bx \to \thalb\scal{\bx}{\bM\bx}$,
we note that 
\cite[Proposition~2.3]{B2003} implies that\footnote{
Recall that the \emph{Fenchel conjugate} 
of a function $f$ defined on $X$ is given by
$f^*\colon x^*\mapsto \sup_{x\in X}\big(\scal{x}{x^*}-f(x)\big)$.}
$\ran \bM_+ = \dom q_{\bM}^* = \bD^\perp$. 
\end{remark}

\begin{fact}[Brezis-Haraux]
\label{f:B-H} 
{\rm (See \cite{BH} and also, e.g.,\cite[Theorem~24.20]{BC2011}.)}
Suppose $A$ and $B$ are monotone
operators on $X$ such that $A+B$ is maximally monotone,
$\dom A \subseteq \dom B$, and $B$ is rectangular.
Then 
$\inte\ran(A+B) = \inte(\ran A + \ran B)$
and
$\overline{\ran(A+B)} = \overline{\ran A + \ran B}$.
\end{fact}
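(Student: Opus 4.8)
Since Fact~\ref{f:B-H} is quoted from \cite{BH} (see also \cite[Theorem~24.20]{BC2011}), the task is to indicate how one reproves it. The plan is to split each displayed identity into one trivial inclusion and one substantial one, and to obtain the substantial inclusion by a Tikhonov/Yosida regularization in which the rectangularity of $B$ supplies the decisive a priori bound. First I would record the easy half: for every $x$ one has $(A+B)x = Ax+Bx \subseteq \ran A + \ran B$, so $\ran(A+B)\subseteq \ran A+\ran B$, and passing to interiors and closures gives $\inte\ran(A+B)\subseteq\inte(\ran A+\ran B)$ and $\overline{\ran(A+B)}\subseteq\overline{\ran A+\ran B}$ at no cost. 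Everything therefore reduces, for the closure statement, to the reverse inclusion $\ran A+\ran B\subseteq\overline{\ran(A+B)}$. So I fix $(a,a^*)\in\gr A$ and $(b,b^*)\in\gr B$, set $w:=a^*+b^*$, and aim to show $w\in\overline{\ran(A+B)}$.

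Next comes the regularization. For each $\lambda\in\RPP$ the operator $A+B+\lambda\Id$ is maximally monotone (since $A+B$ is and $\lambda\Id$ is continuous with full domain) and $\lambda$-strongly monotone, hence surjective; thus there is a unique $x_\lambda$ with $w+\lambda a\in(A+B)x_\lambda+\lambda x_\lambda$. Choosing $u_\lambda\in Ax_\lambda$ and $v_\lambda\in Bx_\lambda$ with $u_\lambda+v_\lambda+\lambda(x_\lambda-a)=w$, I have $u_\lambda+v_\lambda=w-\lambda(x_\lambda-a)\in\ran(A+B)$, so it suffices to prove $\lambda(x_\lambda-a)\to 0$ as $\lambda\downarrow 0$. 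Pairing the identity $(u_\lambda-a^*)+(v_\lambda-b^*)=-\lambda(x_\lambda-a)$ with $x_\lambda-a$ and discarding the nonnegative term $\scal{x_\lambda-a}{u_\lambda-a^*}$ (monotonicity of $A$) yields $\scal{x_\lambda-a}{v_\lambda-b^*}\le -\lambda\|x_\lambda-a\|^2$. Here rectangularity of $B$ enters: since $a\in\dom A\subseteq\dom B$ and $b^*\in\ran B$, the quantity $C:=\sup_{(z,z^*)\in\gr B}\scal{a-z}{z^*-b^*}$ is finite, and evaluating the defining supremum at $(x_\lambda,v_\lambda)$ gives $\scal{x_\lambda-a}{v_\lambda-b^*}\ge -C$. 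Combining the two bounds gives $\lambda\|x_\lambda-a\|^2\le C$, whence $\|\lambda(x_\lambda-a)\|\le\sqrt{C\lambda}\to 0$. This proves $w\in\overline{\ran(A+B)}$, hence $\overline{\ran A+\ran B}\subseteq\overline{\ran(A+B)}$, i.e.\ the closure identity.

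Finally, for the interior identity I would avoid a second estimate and instead invoke the classical fact that the range of a maximally monotone operator has convex closure with $\inte\ran(A+B)=\inte\overline{\ran(A+B)}$. Together with the inclusions $\ran(A+B)\subseteq\ran A+\ran B\subseteq\overline{\ran(A+B)}$ already in hand, this sandwiches $\inte\ran(A+B)\subseteq\inte(\ran A+\ran B)\subseteq\inte\overline{\ran(A+B)}=\inte\ran(A+B)$ and forces equality throughout. The main obstacle is the a priori bound in the middle paragraph: without rectangularity the pairing only delivers $\scal{x_\lambda-a}{v_\lambda-b^*}\le -\lambda\|x_\lambda-a\|^2\le 0$, which is far too weak to control $\lambda(x_\lambda-a)$. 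It is precisely the finiteness of the rectangularity supremum at the admissible pair $(a,b^*)$ — legitimized by $\dom A\subseteq\dom B$, which places $a$ in $\dom B$ — that closes the gap and makes the whole scheme work.
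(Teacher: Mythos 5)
This statement is one the paper does not prove at all: it is imported as a Fact, with the proof delegated to the citations \cite{BH} and \cite[Theorem~24.20]{BC2011}. Your argument is a correct, self-contained reconstruction of essentially the classical Brezis--Haraux proof that lives in those references: the trivial inclusion $\ran(A+B)\subseteq\ran A+\ran B$; Minty surjectivity of $A+B+\lambda\Id$ (legitimate, since $A+B$ is assumed maximally monotone, so $\ran(A+B+\lambda\Id)=\lambda\,\ran(\Id+\lambda^{-1}(A+B))=X$); the splitting $u_\lambda+v_\lambda+\lambda(x_\lambda-a)=w$; monotonicity of $A$ to discard one pairing term; and rectangularity of $B$ evaluated at the pair $(a,b^*)$ --- admissible precisely because $\dom A\subseteq\dom B$ --- to get $\lambda\|x_\lambda-a\|^2\leq C$ and hence $\lambda(x_\lambda-a)\to 0$. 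Two features of your write-up are worth noting. First, centering the regularization at $a$ (solving $w+\lambda a\in(A+B+\lambda\Id)x_\lambda$ rather than $w\in(A+B+\lambda\Id)x_\lambda$) is a clean cosmetic variant that makes the a priori bound come out in one line; it also guarantees $C\geq 0$ automatically (evaluate the supremum at $(b,b^*)$), so $\sqrt{C\lambda}$ makes sense. Second, you obtain the interior identity for free from the closure identity by sandwiching with Rockafellar's virtual-convexity theorem ($\inte\ran T=\inte\overline{\ran T}$ for $T$ maximally monotone, applied to $T=A+B$); this is exactly how the cited sources handle the interior statement as well, so your only non-elementary external ingredients (Minty's theorem and Rockafellar's fact) are the same ones the literature uses. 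In short: correct, and faithful to the standard proof that the paper chose to cite rather than reproduce.
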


Applying the Brezis-Haraux result to our given operators $\bA$ and $\bM$,
we obtain the following. 

\begin{corollary}
\label{c:M}
The operator $\bA+\bM$ is maximally monotone and 
 $\overline{\ran(\bA+\bM)} 
= \overline{\bD^\bot + \ran\bA}$.
\end{corollary}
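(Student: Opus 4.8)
The plan is to recognize the statement as a direct application of the Brezis--Haraux theorem (Fact~\ref{f:B-H}) to the pair $A:=\bA$ and $B:=\bM$, after which the asserted range identity drops out by substituting the explicit range of $\bM$ computed in Theorem~\ref{t:M}. So the bulk of the work is verifying the three hypotheses of Fact~\ref{f:B-H}; two of them are immediate from the preceding results, and only the maximal monotonicity of the sum requires a (standard) argument.

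First I would dispose of the two cheap hypotheses. The rectangularity of $\bM$ is exactly Theorem~\ref{t:M}\ref{t:M2}, so the rectangularity requirement on $B$ is already in hand. For the domain inclusion, Theorem~\ref{t:M}\ref{t:M1} gives $\dom\bM=\bX$, whence $\dom\bA\subseteq\bX=\dom\bM$ holds trivially.

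The one step needing genuine (though routine) justification is the maximal monotonicity of $\bA+\bM$. Here I would invoke Rockafellar's sum theorem: $\bA$ is maximally monotone as a Cartesian product of the maximally monotone operators $A_1,\ldots,A_m$, and $\bM$ is maximally monotone with $\dom\bM=\bX$ by Theorem~\ref{t:M}\ref{t:M1}; since each $A_i$ has nonempty domain, $\dom\bA\neq\varnothing$, and because $\inte\dom\bM=\bX$ the constraint qualification $\dom\bA\cap\inte\dom\bM\neq\varnothing$ is automatic. Hence $\bA+\bM$ is maximally monotone. (Equivalently, one may cite the classical fact that adding a single-valued, continuous, everywhere-defined monotone operator such as $\bM$ preserves maximal monotonicity.) I expect this to be the only point requiring care, and even it is painless precisely because $\bM$ is everywhere defined.

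With all three hypotheses in place, Fact~\ref{f:B-H} yields $\overline{\ran(\bA+\bM)}=\overline{\ran\bA+\ran\bM}$. Finally I would substitute $\ran\bM=\bD^\perp$ from Theorem~\ref{t:M}\ref{t:M4}, obtaining $\overline{\ran(\bA+\bM)}=\overline{\ran\bA+\bD^\perp}=\overline{\bD^\perp+\ran\bA}$, which is the claimed identity; the maximal monotonicity of $\bA+\bM$ has been established en route. The main conceptual obstacle is really upstream, namely establishing that $\bM$ is rectangular with $\ran\bM=\bD^\perp$, but that has already been settled in Theorem~\ref{t:M}, so here there is essentially no obstacle beyond careful bookkeeping of the hypotheses.
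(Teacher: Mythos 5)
Your proposal is correct and follows essentially the same route as the paper's own proof: maximal monotonicity of $\bA+\bM$ via the sum theorem (automatic since $\dom\bM=\bX$), rectangularity and $\ran\bM=\bD^\perp$ from Theorem~\ref{t:M}, and then Fact~\ref{f:B-H}. Your extra remarks on the constraint qualification and $\dom\bA\neq\varnothing$ are just slightly more explicit bookkeeping of what the paper leaves implicit.
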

\begin{proof}
Since each $A_i$ is maximally monotone and recalling
Theorem~\ref{t:M}\ref{t:M1}, we see that  
$\bA$ and $\bM$ are maximally monotone.
On the other hand, $\dom\bM = \bX$.
Thus, by the well known sum theorem for maximally monotone operators (see,
e.g, \cite[Corollary~24.4(i)]{BC2011}), $\bA+\bM$ is maximally monotone.
Furthermore, 
by Theorem~\ref{t:M}\ref{t:M2}\&\ref{t:M4}, 
$\bM$ is rectangular and $\ran\bM = \bD^\perp$. 
The result therefore follows from Fact~\ref{f:B-H}. 
\end{proof}

\section{Composition}

We now use Corollary~\ref{c:M} to study the composition. 

\begin{theorem}
\label{t:main}
Suppose that $(\forall i\in I)$
$0\in\overline{\ran(\Id-T_i)}$. 
Then the following hold.
\begin{enumerate}
\item
\label{t:main1}
$\bzero\in\overline{\ran(\bA+\bM)}$. 
\item
\label{t:main2}
$(\forall\varepsilon>0)$
$(\exists (\bb,\bx)\in\bX\times \bX)$
$\|\bb\|\leq\varepsilon$
and 
$\bx=\bT(\bb+\bR\bx)$.
\item
\label{t:main3}
$(\forall\varepsilon>0)$
$(\exists (\bc,\bx)\in\bX\times \bX)$
$\|\bc\|\leq\varepsilon$
and 
$\bx=\bc+ \bT(\bR\bx)$.
\item
\label{t:main4}
$(\forall\varepsilon>0)$
$(\exists \bx\in\bX)$
$(\forall i\in I)$
$\|T_{i-1}\cdots T_1x_m-T_iT_{i-1}\cdots T_1x_m -x_{i-1}+x_i\|
\leq (2i-1)\varepsilon$,
where $x_0 = x_m$. 
\item
\label{t:main5}
$(\forall\varepsilon>0)$
$(\exists x\in X)$
$\|x-T_mT_{m-1}\cdots T_1x\|\leq m^2\varepsilon$. 
\end{enumerate}
\end{theorem}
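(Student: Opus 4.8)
The plan is to prove the five assertions as a cascade, with essentially all of the conceptual content sitting in~\ref{t:main1} and everything afterward reducing to resolvent algebra and bookkeeping. For~\ref{t:main1}, the first step is to recognize that the hypothesis is really a statement about the ranges of the $A_i$. By the Minty parametrization~\eqref{e:MintPar}, $\gr A_i = \menge{(T_ix,x-T_ix)}{x\in X}$, so the set of second coordinates is exactly $\ran(\Id-T_i)$; hence $\ran A_i = \ran(\Id-T_i)$ and the hypothesis reads $0\in\overline{\ran A_i}$ for every $i\in I$. Since closure commutes with the finite Cartesian product, this gives $\bzero\in\overline{\ran\bA}$. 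As $\bzero\in\bD^\perp$, we have $\ran\bA\subseteq\bD^\perp+\ran\bA$, whence $\bzero\in\overline{\bD^\perp+\ran\bA}$, and Corollary~\ref{c:M} converts this into $\bzero\in\overline{\ran(\bA+\bM)}$.

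For~\ref{t:main2}, fix $\varepsilon>0$ and use~\ref{t:main1} to choose $\bb\in\ran(\bA+\bM)$ with $\|\bb\|\le\varepsilon$, say $\bb\in(\bA+\bM)\bx$ for some $\bx\in\dom\bA$. Since $\bM\bx=\bx-\bR\bx$, this rearranges to $\bb+\bR\bx\in(\bId+\bA)\bx$; applying the resolvent $\bT=(\bId+\bA)^{-1}$ yields $\bx=\bT(\bb+\bR\bx)$. For~\ref{t:main3} I would push the perturbation to the outside: set $\bc:=\bx-\bT(\bR\bx)=\bT(\bb+\bR\bx)-\bT(\bR\bx)$, so that nonexpansiveness of $\bT$ gives $\|\bc\|\le\|\bb\|\le\varepsilon$ while $\bx=\bc+\bT(\bR\bx)$ holds by construction.

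The last two parts come from unrolling~\ref{t:main3} coordinatewise. Writing $\bx=\bc+\bT(\bR\bx)$ componentwise gives $x_i=c_i+T_ix_{i-1}$ with $x_0=x_m$ and $\|c_i\|\le\|\bc\|\le\varepsilon$ for each $i$. Abbreviating $S_i:=T_iT_{i-1}\cdots T_1$ and $S_0:=\Id$, an induction on $i$ that invokes nonexpansiveness of $T_i$ at each stage shows $\|x_i-S_ix_m\|\le i\varepsilon$. Then~\ref{t:main4} is immediate from the identity $S_{i-1}x_m-S_ix_m-x_{i-1}+x_i=(S_{i-1}x_m-x_{i-1})+(x_i-S_ix_m)$ together with the triangle inequality, the two summands being bounded by $(i-1)\varepsilon$ and $i\varepsilon$. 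For~\ref{t:main5} I would sum these $m$ expressions over $i\in I$: the $S$-terms telescope to $x_m-S_mx_m$, the $x$-terms telescope to $x_m-x_0=0$ (here $x_0=x_m$ is used), and the triangle inequality with $\sum_{i=1}^m(2i-1)=m^2$ gives $\|x_m-S_mx_m\|\le m^2\varepsilon$; the choice $x:=x_m$ finishes the argument.

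The main obstacle is~\ref{t:main1}, and specifically the passage $\overline{\ran(\bA+\bM)}=\overline{\bD^\perp+\ran\bA}$: this equality would fail for a naive sum of ranges and is available only because $\bM$ is rectangular, i.e.\ it rests on Corollary~\ref{c:M} and hence on Brezis--Haraux. Once~\ref{t:main1} is in hand the remaining steps are routine; the only points demanding care are the cyclic indexing convention $x_0=x_m$ and keeping the accumulated constant $i\varepsilon$ correct as the composition grows.
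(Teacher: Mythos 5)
Your proposal is correct and follows essentially the same route as the paper's own proof: Minty's parametrization plus Corollary~\ref{c:M} (Brezis--Haraux) for \ref{t:main1}, the resolvent identity for \ref{t:main2}, nonexpansiveness of $\bT$ for \ref{t:main3}, induction on the coordinatewise recursion $x_i=c_i+T_ix_{i-1}$ for \ref{t:main4}, and telescoping with $\sum_{i=1}^m(2i-1)=m^2$ for \ref{t:main5}. Your closing remark correctly identifies the rectangularity of $\bM$ as the one nontrivial ingredient.
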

\begin{proof}
\ref{t:main1}: 
The assumptions and \eqref{e:MintPar} imply that 
$(\forall i\in I)$ $0\in\overline{\ran A_i}$. 
Hence, $\bzero\in \overline{\ran \bA}$. 
Obviously, $\bzero\in\bD^\perp$. 
It follows that 
$\bzero\in\overline{\bD^\perp + \ran\bA}$.
Thus, by Corollary~\ref{c:M},
$\bzero\in\overline{\ran(\bA+\bM)}$. 

\ref{t:main2}: 
Fix $\varepsilon>0$. 
In view of \ref{t:main1}, there
exists $\bx\in\bX$ and $\bb\in\bX$ such that
$\|\bb\|\leq\varepsilon$ 
and $\bb\in \bA\bx+\bM\bx$. 
Hence $\bb+\bR\bx \in (\bId+\bA)\bx$ and thus
$\bx = J_{\bA}(\bb+\bR\bx) = \bT(\bb+\bR\bx)$. 

\ref{t:main3}: 
Let $\varepsilon > 0$.
By \ref{t:main2}, there exists 
$(\bb,\bx)\in\bX\times\bX)$ such that
$\|\bb\|\leq \varepsilon$ and 
$\bx = \bT(\bb+\bR\bx)$.
Set $\bc = \bx-\bT(\bR\bx)=
\bT(\bb+\bR\bx) - \bT(\bR\bx)$
Then, since $\bT$ is nonexpansive, 
$\|\bc\| = \|\bT(\bb+\bR\bx) - \bT(\bR\bx)\|
\leq \|\bb\|\leq\varepsilon$. 

\ref{t:main4}: 
Take $\varepsilon>0$.
Then, by \ref{t:main3}, there exists
$\bx\in\bX$ and $\bc\in\bX$ such that
$\|\bc\|\leq\varepsilon$
and $\bx = \bc+\bT(\bR\bx)$. 
Let $i\in I$. Then
$x_i = c_i + T_ix_{i-1}$. 
Since $\|c_i\|\leq\|\bc\|\leq\varepsilon$ and
$T_i$ is nonexpansive, we have 
\begin{subequations}
\begin{align}
\|T_iT_{i-1}\cdots T_1x_0-x_i\| 
&\leq \|T_iT_{i-1}\cdots T_1x_0-T_ix_{i-1}\|
+ \|T_ix_{i-1} - x_i\|\\
&\leq \|T_iT_{i-1}\cdots T_1x_0-T_ix_{i-1}\|
+ \varepsilon.
\end{align}
\end{subequations}
We thus obtain inductively
\begin{equation}
\label{e:trinity1}
\|T_iT_{i-1}\cdots T_1x_0-x_i\| \leq i\varepsilon.
\end{equation}
Hence,
\begin{equation}
\label{e:trinity2}
\|T_{i-1}\cdots T_1x_0-x_{i-1}\| \leq (i-1)\varepsilon.
\end{equation}
The conclusion now follows from 
adding \eqref{e:trinity1} and \eqref{e:trinity2},
and recalling the triangle inequality.

\ref{t:main5}: 
Let $\varepsilon>0$. 
In view of \ref{t:main4},
there exists $\bx \in\bX$ such that 
\begin{equation}
\label{e:sunday}
(\forall i\in I)\quad
\|T_{i-1}\cdots T_1x_m-T_iT_{i-1}\cdots T_1x_m -x_{i-1}+x_i\|
\leq (2i-1)\varepsilon
\end{equation}
where $x_0 = x_m$.
Now set $(\forall i\in I)$
$e_i = T_{i-1}\cdots T_1x_m-T_iT_{i-1}\cdots T_1x_m -x_{i-1}+x_i$.
Then $(\forall i\in I)$ $\|e_i\| \leq (2i-1)\varepsilon$. 
Set $x = x_m$. Then
\begin{align}
\sum_{i=1}^{m}e_i &=
\sum_{i=1}^{m} T_{i-1}\cdots T_1x_m-T_iT_{i-1}\cdots T_1x_m
-x_{i-1}+x_i\\
&= x-T_mT_{m-1}\cdots T_1x.
\end{align}
This, \eqref{e:sunday}, and the triangle inequality imply that 
\begin{equation}
\|x-T_mT_{m-1}\cdots T_1x\|
\leq \sum_{i=1}^{m}\|e_i\|
\leq \sum_{i=1}^{m} (2i-1)\varepsilon = m^2\varepsilon.
\end{equation}
This completes the proof.
\end{proof}

\begin{corollary}
\label{c:poppadoms}
Suppose that $(\forall i\in I)$
$0\in\overline{\ran(\Id-T_i)}$. 
Then $0\in\overline{\ran(\Id-T_mT_{m-1}\cdots T_1)}$. 
\end{corollary}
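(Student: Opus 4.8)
The plan is to recognize that the assertion $0\in\overline{\ran(\Id-T_mT_{m-1}\cdots T_1)}$ is merely a reformulation of the approximate fixed point estimate already obtained in Theorem~\ref{t:main}\ref{t:main5}. Writing $T := T_mT_{m-1}\cdots T_1$, we have $\ran(\Id-T) = \menge{x-Tx}{x\in X}$, so that $0\in\overline{\ran(\Id-T)}$ holds if and only if $\inf_{x\in X}\|x-Tx\|=0$; equivalently, if and only if for every $\varepsilon>0$ there exists $x\in X$ with $\|x-Tx\|\le\varepsilon$. Thus the first step is simply to make this standard identification of membership of $0$ in the closure of the range with the existence of approximate fixed points of $T$.

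Next I would fix $\varepsilon>0$ and invoke Theorem~\ref{t:main}\ref{t:main5} with $\varepsilon$ replaced by $\varepsilon/m^2$; note that the hypothesis $(\forall i\in I)\ 0\in\overline{\ran(\Id-T_i)}$ of the corollary is precisely the standing assumption of that theorem. This yields some $x\in X$ with $\|x-T_mT_{m-1}\cdots T_1x\|\le m^2\cdot(\varepsilon/m^2)=\varepsilon$, i.e.\ $\|x-Tx\|\le\varepsilon$. Since $\varepsilon>0$ was arbitrary, $\inf_{x\in X}\|x-Tx\|=0$, and the reformulation of the first paragraph delivers $0\in\overline{\ran(\Id-T)}$, as required.

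I do not expect any genuine obstacle here: all of the substantive work has already been carried out in Theorem~\ref{t:main}, whose part \ref{t:main5} encapsulates the passage through the product space $\bX$, the Brezis--Haraux range approximation of Corollary~\ref{c:M}, and the telescoping estimate accumulating the per-factor errors into the bound $m^2\varepsilon$. Consequently the corollary follows by the elementary rescaling $\varepsilon\mapsto\varepsilon/m^2$ together with the routine observation that $0\in\overline{\ran(\Id-T)}$ is equivalent to $T$ possessing approximate fixed points of arbitrarily small displacement.
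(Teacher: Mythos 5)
Your proposal is correct and is essentially the paper's own proof: the paper likewise deduces the corollary directly from Theorem~\ref{t:main}\ref{t:main5}, with the rescaling of $\varepsilon$ and the identification of $0\in\overline{\ran(\Id-T_mT_{m-1}\cdots T_1)}$ with the existence of approximate fixed points being the routine details you have merely made explicit.
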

\begin{proof}
This follows from Theorem~\ref{t:main}\ref{t:main5}.
\end{proof}

\begin{remark}
The converse implication in Corollary~\ref{c:poppadoms} fails in general:
indeed, consider the case when $X\neq\{0\}$,
$m=2$, and $v\in X\smallsetminus\{0\}$.
Now set $T_1\colon X\to X\colon x\mapsto x+v$ and 
set $T_2\colon X\to X\colon x\mapsto x-v$.
Then $0\notin\overline{\ran(\Id-T_1)} = \{-v\}$ and
$0\notin\overline{\ran(\Id-T_2)} = \{v\}$; 
however, $T_2T_1 = \Id$ and $\overline{\ran(\Id-T_2T_1)}=\{0\}$.
\end{remark}

\begin{remark}
\label{r:optimal}
Corollary~\ref{c:poppadoms} is optimal in the sense that
even if $(\forall i\in I)$ we have $0 \in\ran(\Id-T_i)$,
we cannot deduce that $0\in\ran(\Id-T_mT_{m-1}\cdots T_1)$:
indeed, suppose that $X=\RR^2$ and $m=2$.
Set $C_1 := \epi \exp$ and $C_2 := \RR\times\{0\}$.
Suppose further that $T_1 = P_{C_1}$ and $T_2=P_{C_2}$.
Then $(\forall i\in I)$ $0\in\ran(\Id-T_i)$; however,
$0\in \overline{\ran(\Id-T_2T_1)}\smallsetminus\ran(\Id-T_2T_1)$. 
\end{remark}

\section{Asymptotic Regularity}

The following notions (taken from Bruck and Reich's seminal paper
\cite{BruckReich}) will be very useful to obtain stronger results.

\begin{definition}[(strong) nonexpansiveness and asymptotic regularity]
Let $S\colon X\to X$.
Then:
\begin{enumerate}
\item $S$ is \emph{nonexpansive} if
$(\forall x\in X)(\forall y\in X)$
$\|Sx-Sy\|\leq\|x-y\|$. 
\item $S$ is \emph{strongly nonexpansive} if $S$ is nonexpansive
and whenever $(x_n)_\nnn$ and $(y_n)_\nnn$ are sequences in $X$ such that
$(x_n-y_n)_\nnn$ is bounded and $\|x_n-y_n\|-\|Sx_n-Sy_n\|\to 0$,
it follows that $(x_n-y_n)-(Sx_n-Sy_n)\to 0$. 
\item $S$ is asymptotically regular if $(\forall x\in X)$ $S^nx-S^{n+1}x\to
0$. 
\end{enumerate}
\end{definition}

The next result illustrates that strongly nonexpansive mappings
generalize the notion of a firmly nonexpansive mapping. In addition, the
class of strongly nonexpansive mappings is closed under compositions. 
(In contrast, the composition of two (necessarily firmly nonexpansive) 
projectors may fail to be firmly nonexpansive.)

\begin{fact}[Bruck and Reich]
\label{f:B-R}
The following hold.
\begin{enumerate}
\item
\label{f:B-R0}
Every firmly nonexpansive mapping is strongly nonexpansive.
\item
\label{f:B-R1}
The composition of finitely many strongly nonexpansive mappings 
is also strongly nonexpansive. 
\end{enumerate}
\end{fact}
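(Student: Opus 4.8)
The plan is to establish the two assertions separately: the first \ref{f:B-R0} by a short direct computation, and the second \ref{f:B-R1} by induction on the number of factors, which reduces matters to a composition of two maps. For \ref{f:B-R0}, I would first record that a firmly nonexpansive $T$ is nonexpansive, since combining the defining inequality $\|Tx-Ty\|^2\leq\scal{x-y}{Tx-Ty}$ with Cauchy--Schwarz gives $\|Tx-Ty\|\leq\|x-y\|$. Then, given sequences $(x_n)_\nnn$ and $(y_n)_\nnn$ with $(x_n-y_n)_\nnn$ bounded and $\|x_n-y_n\|-\|Tx_n-Ty_n\|\to 0$, I would set $u_n:=x_n-y_n$ and $v_n:=Tx_n-Ty_n$ and expand
\[
\|u_n-v_n\|^2 = \|u_n\|^2-2\scal{u_n}{v_n}+\|v_n\|^2.
\]
Using firm nonexpansiveness in the form $\scal{u_n}{v_n}\geq\|v_n\|^2$, this is at most $\|u_n\|^2-\|v_n\|^2=(\|u_n\|-\|v_n\|)(\|u_n\|+\|v_n\|)$; the first factor tends to $0$ by hypothesis while the second is bounded (since $\|v_n\|\leq\|u_n\|$ and $(u_n)_\nnn$ is bounded), so $\|u_n-v_n\|\to 0$, which is precisely the conclusion.

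For \ref{f:B-R1}, by an obvious induction it suffices to treat a composition $S=S_2\circ S_1$ of two strongly nonexpansive mappings. Nonexpansiveness of $S$ is immediate. Given $(x_n)_\nnn$ and $(y_n)_\nnn$ with $(x_n-y_n)_\nnn$ bounded and $\|x_n-y_n\|-\|Sx_n-Sy_n\|\to 0$, the key observation I would exploit is the chain of nonexpansive estimates
\[
\|x_n-y_n\|\ \geq\ \|S_1x_n-S_1y_n\|\ \geq\ \|S_2S_1x_n-S_2S_1y_n\|.
\]
Since the outermost norms differ by a null sequence, the two intermediate gaps $\|x_n-y_n\|-\|S_1x_n-S_1y_n\|$ and $\|S_1x_n-S_1y_n\|-\|S_2S_1x_n-S_2S_1y_n\|$ are each squeezed to $0$; moreover $(S_1x_n-S_1y_n)_\nnn$ is bounded because $\|S_1x_n-S_1y_n\|\leq\|x_n-y_n\|$.

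With these facts in hand, I can apply the strong nonexpansiveness of $S_1$ to $(x_n)_\nnn,(y_n)_\nnn$ to obtain $(x_n-y_n)-(S_1x_n-S_1y_n)\to 0$, and the strong nonexpansiveness of $S_2$ to the bounded sequences $(S_1x_n)_\nnn,(S_1y_n)_\nnn$ to obtain $(S_1x_n-S_1y_n)-(S_2S_1x_n-S_2S_1y_n)\to 0$. Adding these two null sequences then yields $(x_n-y_n)-(Sx_n-Sy_n)\to 0$, as required, and the induction finishes the general case.

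The computations throughout are routine; the one point that demands care---and hence the main (minor) obstacle---is the squeezing argument in \ref{f:B-R1}. One must check that the single hypothesis, phrased in terms of the outermost composition, indeed forces \emph{each} intermediate norm-gap to vanish, and that the intermediate sequences stay bounded, so that the strong nonexpansiveness of each individual factor may legitimately be invoked.
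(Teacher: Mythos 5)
Your proof is correct, but note that the paper itself does not prove this statement at all: it is stated as a \emph{Fact} and the ``proof'' consists of citations to Bruck and Reich's 1977 paper (part \ref{f:B-R0} to their Proposition~2.1, part \ref{f:B-R1} to their Proposition~1.1). Your argument therefore supplies a self-contained proof of what the paper delegates to the literature, and both halves are sound. For \ref{f:B-R0}, the estimate
\begin{equation*}
\|u_n-v_n\|^2 \;=\; \|u_n\|^2 - 2\scal{u_n}{v_n} + \|v_n\|^2
\;\leq\; \|u_n\|^2 - \|v_n\|^2
\;=\; \bigl(\|u_n\|-\|v_n\|\bigr)\bigl(\|u_n\|+\|v_n\|\bigr)
\end{equation*}
is exactly the right use of firm nonexpansiveness ($\scal{u_n}{v_n}\geq\|v_n\|^2$), and the boundedness of $\|u_n\|+\|v_n\|$ closes the argument; this inner-product computation is cleaner than the original Bruck--Reich treatment, which works in the Banach-space setting where firm nonexpansiveness must be formulated without an inner product. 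For \ref{f:B-R1}, your squeeze is airtight: with $a_n\geq b_n\geq c_n$ denoting the three norms and $a_n-c_n\to 0$, both nonnegative gaps $a_n-b_n$ and $b_n-c_n$ are dominated by $a_n-c_n$ and hence vanish, and $\|S_1x_n-S_1y_n\|\leq\|x_n-y_n\|$ gives the boundedness needed to invoke strong nonexpansiveness of $S_2$ at the intermediate sequences; adding the two resulting null sequences and inducting on the number of factors is precisely the structure of Bruck and Reich's own proof. In short: correct, essentially the classical argument, and a useful addition given that the paper only cites it.
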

\begin{proof}
\ref{f:B-R0}:
See \cite[Proposition~2.1]{BruckReich}. 
\ref{f:B-R1}:
See \cite[Proposition~1.1]{BruckReich}. 
\end{proof}

The sequences of iterates and of differences of iterates have 
striking convergence properties as we shall see now. 

\begin{fact}[Bruck and Reich]
\label{f:Bru}
Let $S\colon X\to X$ be strongly nonexpansive and let $x\in X$. 
Then the following hold.
\begin{enumerate}
\item
\label{f:Bru2}
The sequence $(S^nx-S^{n+1}x)_\nnn$ converges strongly to the unique
element of least norm in $\overline{\ran(\Id-S)}$. 
\item
\label{f:Bru3}
If $\Fix S=\varnothing$, then $\|S^nx\|\to\pinf$.
\item
\label{f:Bru4}
If $\Fix S\neq\varnothing$, then
$(S^nx)_\nnn$ converges weakly to a fixed point of $S$. 
\end{enumerate}
\end{fact}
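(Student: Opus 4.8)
The plan is to handle the three assertions in turn, with essentially all the difficulty concentrated in~(i). Write $C:=\overline{\ran(\Id-S)}$ and recall the classical fact that, for a nonexpansive self-map of a Hilbert space, $C$ is closed and \emph{convex}; consequently $0$ has a unique nearest point $v$ in $C$, which is exactly the least-norm element named in~(i), and $\|v\|=\inf_{y\in X}\|y-Sy\|=\operatorname{dist}(0,C)$. Put $d_n:=S^nx-S^{n+1}x=(\Id-S)(S^nx)\in C$. Since $S$ is nonexpansive, $\|d_{n+1}\|=\|S(S^nx)-S(S^{n+1}x)\|\le\|d_n\|$, so $(\|d_n\|)_\nnn$ decreases to some $\ell\ge\|v\|$.

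The crux of~(i) is to upgrade this to $\ell=\|v\|$, and this is precisely where strong (as opposed to mere) nonexpansiveness is indispensable. Fix $\varepsilon>0$ and choose $u\in X$ with $\|u-Su\|\le\|v\|+\varepsilon$. The two orbits $(S^nx)_\nnn$ and $(S^nu)_\nnn$ satisfy $\|S^nx-S^nu\|\le\|x-u\|$ (bounded), and since $(\|S^nx-S^nu\|)_\nnn$ is nonincreasing hence convergent, $\|S^nx-S^nu\|-\|S^{n+1}x-S^{n+1}u\|\to0$. Strong nonexpansiveness applied to $x_n:=S^nx$, $y_n:=S^nu$ then yields $(S^nx-S^nu)-(S^{n+1}x-S^{n+1}u)\to0$, i.e.\ $d_n-(S^nu-S^{n+1}u)\to0$; as $\|S^nu-S^{n+1}u\|\le\|u-Su\|\le\|v\|+\varepsilon$, we obtain $\ell=\lim\|d_n\|\le\|v\|+\varepsilon$, and letting $\varepsilon\downarrow0$ gives $\ell=\|v\|$. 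To finish~(i), take any weakly convergent subsequence $d_{n_k}\weakly w$ (the $d_n$ are bounded); since $C$ is closed and convex it is weakly closed, so $w\in C$ and $\|w\|\le\liminf\|d_{n_k}\|=\|v\|$, forcing $w=v$ by uniqueness of the least-norm point. Thus $d_n\weakly v$, and combined with $\|d_n\|\to\|v\|$ this gives strong convergence $d_n\to v$.

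For~(ii) and~(iii) I would use the telescoping identity $S^nx=x-\sum_{k=0}^{n-1}d_k$. Since $d_k\to v$, the Ces\`aro averages give $\tfrac{1}{n}S^nx\to-v$, so if $v\neq0$ then $\|S^nx\|\to\pinf$; if instead $v=0$, then~(i) reads $d_n\to0$ (asymptotic regularity). For~(ii), assume $\Fix S=\varnothing$: the case $v\neq0$ is settled, while if $v=0$ a bounded subsequence $S^{n_k}x$ would have a weak cluster point $p$, and demiclosedness of $\Id-S$ at $0$ together with $(\Id-S)(S^{n_k}x)=d_{n_k}\to0$ would force $p\in\Fix S$, a contradiction; hence no bounded subsequence exists and $\|S^nx\|\to\pinf$. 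For~(iii), $\Fix S\neq\varnothing$ gives $0\in\ran(\Id-S)\subseteq C$, so $v=0$ and $d_n\to0$; moreover, for any $p\in\Fix S$ the sequence $\|S^nx-p\|=\|S^nx-S^np\|$ is nonincreasing, so $(S^nx)_\nnn$ is bounded and Fej\'er monotone. Demiclosedness places every weak cluster point in $\Fix S$, and Opial's property of Hilbert space (using that $\lim_n\|S^nx-p\|$ exists for each fixed $p$) rules out two distinct cluster points, yielding weak convergence of $(S^nx)_\nnn$ to a fixed point.

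The main obstacle is the equality $\ell=\|v\|$ in~(i): nonexpansiveness alone yields only the monotone limit $\ell\ge\|v\|$, and it is the two-orbit comparison powered by strong nonexpansiveness that drives the displacement along a single orbit down to the global infimum. The remaining ingredients—Fej\'er monotonicity, the demiclosedness principle, and Opial's lemma—are standard Hilbert-space tools.
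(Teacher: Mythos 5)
Your proof is correct, but it takes a genuinely different route from the paper: the paper does not prove this Fact at all---it is stated as imported material, with each item justified only by citation to Bruck and Reich's 1977 paper (items (i), (ii), (iii) are their Corollaries~1.5, 1.4, and 1.3, respectively, proved there in a general Banach-space framework). What you have supplied is a self-contained Hilbert-space proof, and it holds up under scrutiny: the two-orbit comparison ($x_n=S^nx$, $y_n=S^nu$ with $u$ an almost-minimizer of the displacement) is exactly the right way to exploit strong nonexpansiveness, and it correctly drives $\lim\|S^nx-S^{n+1}x\|$ down to $\operatorname{dist}\bigl(0,\overline{\ran(\Id-S)}\bigr)$; the remaining steps rest on three classical ingredients you invoke accurately---convexity of $\overline{\ran(\Id-S)}$ (which follows, e.g., from maximal monotonicity of $\Id-S$ for nonexpansive $S$), Browder's demiclosedness principle, and Opial's lemma. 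The trade-off between the two approaches: the citation covers the statement in Banach spaces with the full Bruck--Reich machinery, whereas your argument is elementary and transparent but genuinely Hilbertian (weak compactness of bounded sets, Opial's property, and the ``weak convergence plus norm convergence implies strong convergence'' step all use the Hilbert structure), so it proves exactly the statement as formulated in this paper and no more. One small point worth making explicit if this were to be written up: the chain $v\ne 0\Rightarrow\|S^nx\|\to\pinf$ via Ces\`aro averages in your part~(ii) silently uses part~(i) ($d_k\to v$ strongly), which is fine, but note that this case does not even need $\Fix S=\varnothing$ as a hypothesis---it is the $v=0$ case where the fixed-point-freeness and demiclosedness do the work.
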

\begin{proof}
\ref{f:Bru2}:
See \cite[Corollary~1.5]{BruckReich}. 
\ref{f:Bru3}:
See \cite[Corollary~1.4]{BruckReich}. 
\ref{f:Bru4}:
See \cite[Corollary~1.3]{BruckReich}. 
\end{proof}

Suppose $S\colon X\to X$ is asymptotically regular.
Then, for every $x\in X$, 
$0\leftarrow S^nx-S^{n+1}x = (\Id-S)S^nx \in\ran(\Id-S)$
and hence $0\in\overline{\ran(\Id-S)}$. 
The opposite implication fails in general (consider $S=-\Id$), but it is
true for strongly nonexpansive mappings.

\begin{corollary}
\label{c:pizza}
Let $S\colon X\to X$ be strongly nonexpansive.
Then $S$ is asymptotically regular if and only 
if $0\in\overline{\ran(\Id-S)}$.
\end{corollary}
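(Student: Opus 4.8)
The plan is to establish the two implications separately, drawing almost all of the substance from Fact~\ref{f:Bru}\ref{f:Bru2}. For the forward implication, suppose $S$ is asymptotically regular. Then for every $x\in X$ we have $S^nx-S^{n+1}x=(\Id-S)S^nx\in\ran(\Id-S)$, and asymptotic regularity says this sequence tends to $0$; hence $0$ is a limit of points of $\ran(\Id-S)$, i.e.\ $0\in\overline{\ran(\Id-S)}$. This is exactly the observation recorded in the paragraph preceding the statement, and it uses neither strong nonexpansiveness nor any nontrivial input—it holds for an arbitrary $S\colon X\to X$.

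For the reverse implication, assume $0\in\overline{\ran(\Id-S)}$. The key point is that $0$ is then automatically the element of least norm in the closed set $\overline{\ran(\Id-S)}$: it lies in the set and has norm $0$, which is the smallest norm any vector can have. Now I would invoke Fact~\ref{f:Bru}\ref{f:Bru2}, which applies precisely because $S$ is strongly nonexpansive: for every $x\in X$ the sequence $(S^nx-S^{n+1}x)_\nnn$ converges strongly to the unique element of least norm in $\overline{\ran(\Id-S)}$. By the previous sentence that element is $0$, so $S^nx-S^{n+1}x\to 0$ for all $x$, which is exactly asymptotic regularity.

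I do not expect a genuine obstacle here, since the analytic heavy lifting (strong convergence of the difference sequence to the minimal-norm element of the range closure) is entirely packaged inside Fact~\ref{f:Bru}\ref{f:Bru2}. The only conceptual step is recognizing that membership of $0$ in the set forces it to be the minimal-norm element; one may note that $\overline{\ran(\Id-S)}$ is closed and convex so that ``the unique element of least norm'' is well defined, but in fact no appeal to uniqueness is needed, as the value $0$ is attained and is trivially minimal. In the final write-up I would emphasize that strong nonexpansiveness enters only through Fact~\ref{f:Bru}\ref{f:Bru2}, and that the forward implication holds for arbitrary $S$.
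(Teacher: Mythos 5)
Your proof is correct and follows exactly the paper's route: the forward implication is the elementary observation recorded just before the statement, and the reverse implication is precisely the application of Fact~\ref{f:Bru}\ref{f:Bru2} with the remark that $0$, lying in $\overline{\ran(\Id-S)}$, must be its least-norm element. The paper compresses this to ``Clear'' and a citation; you have simply spelled out the same argument.
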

\begin{proof}
``$\Rightarrow$'': Clear.
``$\Leftarrow$'': Fact~\ref{f:Bru}\ref{f:Bru2}.
\end{proof}

\begin{corollary}
\label{c:B-R} 
Set $S := T_mT_{m-1}\cdots T_1$.
Then $S$ is asymptotically regular
if and only if 
$0\in\overline{\ran(\Id-S)}$.
\end{corollary}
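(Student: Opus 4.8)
The plan is to recognize this statement as an immediate specialization of the general equivalence already established in Corollary~\ref{c:pizza}. That corollary says exactly that a \emph{strongly} nonexpansive self-map is asymptotically regular precisely when $0\in\overline{\ran(\Id-S)}$. Hence essentially all the work reduces to verifying that the particular composition $S := T_mT_{m-1}\cdots T_1$ belongs to the class of strongly nonexpansive mappings; once that membership is confirmed, the desired equivalence is not a new result but merely an instance of Corollary~\ref{c:pizza}.

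To carry this out I would proceed in two short steps. First, I would invoke Fact~\ref{f:B-R}\ref{f:B-R0} to observe that each of the firmly nonexpansive operators $T_1,\ldots,T_m$ is strongly nonexpansive. Second, I would apply Fact~\ref{f:B-R}\ref{f:B-R1}, which guarantees that the class of strongly nonexpansive mappings is closed under finite composition, to conclude that $S=T_mT_{m-1}\cdots T_1$ is itself strongly nonexpansive. With this established, Corollary~\ref{c:pizza} applied to $S$ delivers both implications of the asserted equivalence at once.

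I do not expect a genuine obstacle here, since the statement is a corollary precisely because the substantive arguments have already been supplied upstream. The only point worth flagging is \emph{why} the reduction is not circular: a composition of firmly nonexpansive maps need \emph{not} be firmly nonexpansive (as the paper stresses for two projectors), so one cannot apply Corollary~\ref{c:pizza} to $S$ directly from firm nonexpansiveness of the factors---the composition-closure property in Fact~\ref{f:B-R}\ref{f:B-R1} is exactly what bridges this gap. For orientation, the forward implication holds for \emph{any} self-map, as noted in the paragraph preceding Corollary~\ref{c:pizza} (from $S^nx-S^{n+1}x=(\Id-S)S^nx\to 0$ one reads off $0\in\overline{\ran(\Id-S)}$), whereas the reverse implication is where strong nonexpansiveness---through the least-norm convergence in Fact~\ref{f:Bru}\ref{f:Bru2}---is genuinely used.
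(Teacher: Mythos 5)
Your proposal is correct and follows exactly the paper's own argument: firm nonexpansiveness of each $T_i$ gives strong nonexpansiveness via Fact~\ref{f:B-R}\ref{f:B-R0}, composition-closure via Fact~\ref{f:B-R}\ref{f:B-R1} makes $S$ strongly nonexpansive, and Corollary~\ref{c:pizza} then yields the equivalence. Your added remark on why the reduction is not circular is a nice touch but does not change the substance.
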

\begin{proof}
Since each $T_i$ is firmly nonexpansive, it is also strongly
nonexpansive by Fact~\ref{f:B-R}\ref{f:B-R0}. 
By Fact~\ref{f:B-R}\ref{f:B-R1}, 
$S$ is strongly nonexpansive. 
Now apply Corollary~\ref{c:pizza}. 
Alternatively, $0\in\overline{\ran(\Id-S)}$ by Corollary~\ref{c:poppadoms}
and again Corollary~\ref{c:pizza} applies.
\end{proof}

We are now ready for our first main result. 

\begin{theorem}
\label{t:Main}
Suppose that each $T_i$ is asymptotically regular.
Then $T_mT_{m-1}\cdots T_1$ is asymptotically regular as well. 
\end{theorem}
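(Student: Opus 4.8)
The plan is to reduce the whole statement to the single range condition $0\in\overline{\ran(\Id-S)}$ for the composition $S := T_mT_{m-1}\cdots T_1$, and then feed this into the characterizations already established. First I would record the trivial direction: asymptotic regularity of any operator forces this range condition. Indeed, if $T_i$ is asymptotically regular, then for each $x\in X$ we have $T_i^nx-T_i^{n+1}x=(\Id-T_i)T_i^nx\to 0$, and since every such difference lies in $\ran(\Id-T_i)$, we conclude $0\in\overline{\ran(\Id-T_i)}$. This is exactly the easy implication noted just before Corollary~\ref{c:pizza}, and it uses nothing beyond the definition of asymptotic regularity. Applying it to each index gives $(\forall i\in I)\;0\in\overline{\ran(\Id-T_i)}$.

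With the family of range conditions in hand, the hypothesis of Corollary~\ref{c:poppadoms} is met, so I would invoke it directly to obtain $0\in\overline{\ran(\Id-S)}$. This is where the genuine content of the paper is concentrated: Corollary~\ref{c:poppadoms} rests on Theorem~\ref{t:main}\ref{t:main5}, which is in turn powered by the Brezis--Haraux approximation result (Corollary~\ref{c:M}) applied to $\bA$ and $\bM$ in the product space $\bX$. I would not re-derive any of that here; the corollaries as stated may simply be quoted.

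Finally I would close the loop back to asymptotic regularity. Each $T_i$ is firmly nonexpansive, hence strongly nonexpansive by Fact~\ref{f:B-R}\ref{f:B-R0}, and strong nonexpansiveness is preserved under composition by Fact~\ref{f:B-R}\ref{f:B-R1}, so $S$ is strongly nonexpansive. Corollary~\ref{c:B-R} (equivalently Corollary~\ref{c:pizza}) then converts $0\in\overline{\ran(\Id-S)}$ back into the asymptotic regularity of $S$, which is the desired conclusion. The main obstacle has already been isolated upstream in the product-space/Brezis--Haraux estimate behind Corollary~\ref{c:poppadoms}; given the corollaries in the excerpt, the remaining assembly is routine, and the only point requiring a moment's care is the elementary translation of asymptotic regularity of each $T_i$ into $0\in\overline{\ran(\Id-T_i)}$.
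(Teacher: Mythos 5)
Your proposal is correct and follows essentially the same route as the paper's own proof: translate asymptotic regularity of each $T_i$ into $0\in\overline{\ran(\Id-T_i)}$, invoke Theorem~\ref{t:main}\ref{t:main5} (equivalently Corollary~\ref{c:poppadoms}) to transfer this condition to the composition, and close with Corollary~\ref{c:B-R}. The only cosmetic difference is that you spell out the easy implication and the strong-nonexpansiveness bookkeeping that the paper leaves implicit in its citations.
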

\begin{proof}
Theorem~\ref{t:main}\ref{t:main5} implies that
$0\in\overline{\ran(\Id-T_mT_{m-1}\cdots T_1)}$. 
The conclusion thus follows from Corollary~\ref{c:B-R}. 
\end{proof}

As an application of Theorem~\ref{t:Main}, we obtain the main result
of \cite{B2003}.

\begin{example}
Let $C_1,\ldots,C_m$ be nonempty closed convex subsets of $X$.
Then the composition of the corresponding projectors,
$P_{C_m}P_{C_{m-1}}\cdots P_{C_1}$ is asymptotically regular.
\end{example}
\begin{proof}
For every $i\in I$, the projector $P_{C_i}$ is firmly nonexpansive,
hence strongly nonexpansive, and $\Fix P_{C_i}=C_i\neq\varnothing$.
Suppose that $(\forall i\in I)$ $T_i = P_{C_i}$, which is thus
asymptotically regular by Corollary~\ref{c:pizza}.
Now apply Theorem~\ref{t:Main}. 
\end{proof}

\section{Convex Combination}

In this section, 
we use our fixed weights $(\lambda_i)_{i\in I}$ (see
\eqref{e:weights}) to turn $X^m$ into a Hilbert product
space \emph{different from} $\bX$ considered in the previous sections.
Specifically, we set
\boxedeqn{
\bY := X^m
\quad\text{with}\quad
\scal{\bx}{\by} = \sum_{i\in I}\lambda_i\scal{x_i}{y_i}
}
so that $\|\bx\|^2 = \sum_{i\in I} \lambda_i \|x_i\|^2$.
We also set 
\boxedeqn{
\bQ\colon X^m \to X^m \colon
 \bx \mapsto (\bar{x})_{i\in I},
\quad\text{where $\bar{x} := \sum_{i\in I}\lambda_ix_i$.}
}

\begin{fact}
\label{f:Q}
{\rm (See \cite[Proposition~28.13]{BC2011}.)}
In the Hilbert product space $\bY$, we have 
$P_{\bD} = \bQ$.
\end{fact}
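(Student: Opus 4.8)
The plan is to invoke the standard variational characterization of the orthogonal projector onto a closed subspace. Note first that $\bD$ is a closed linear subspace of $\bY$, being the set of all $\bx=(x_i)_{i\in I}$ with $x_1=\cdots=x_m$, i.e.\ an intersection of kernels of continuous linear maps. Consequently $P_{\bD}$ is the unique operator on $\bY$ such that, for every $\bx\in\bY$, one has $P_{\bD}\bx\in\bD$ and $\bx-P_{\bD}\bx$ is orthogonal to $\bD$ with respect to the \emph{weighted} inner product of $\bY$. The strategy is therefore to show that $\bQ$ satisfies these two defining properties.

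First I would check the range condition, which is immediate: with $\bar{x}=\sum_{i\in I}\lambda_ix_i$, every coordinate of $\bQ\bx$ equals $\bar{x}$, so $\bQ\bx=(\bar{x})_{i\in I}\in\bD$. Next I would verify orthogonality. Taking an arbitrary diagonal element $(d)_{i\in I}\in\bD$ with $d\in X$ and using the inner product of $\bY$, I would compute
\[
\scal{\bx-\bQ\bx}{(d)_{i\in I}}
=\sum_{i\in I}\lambda_i\scal{x_i-\bar{x}}{d}
=\scal{\sum_{i\in I}\lambda_i(x_i-\bar{x})}{d}.
\]
The proof then reduces to the single identity $\sum_{i\in I}\lambda_i(x_i-\bar{x})=\sum_{i\in I}\lambda_ix_i-\big(\sum_{i\in I}\lambda_i\big)\bar{x}=\bar{x}-\bar{x}=0$, which holds because $\sum_{i\in I}\lambda_i=1$ by \eqref{e:weights}. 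Hence the pairing vanishes for every $(d)_{i\in I}\in\bD$, so $\bx-\bQ\bx\in\bD^{\perp}$, and the characterization yields $P_{\bD}=\bQ$. (Alternatively, one could verify directly that $\bQ$ is linear, idempotent, and self-adjoint for the weighted inner product with range $\bD$, but the variational route above is shorter.)

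There is essentially no obstacle here; the computation is routine. The one point worth emphasizing is that the inner product in play is the weighted one, so the \emph{same} coefficients $(\lambda_i)_{i\in I}$ appear both in the definition of $\bar{x}$ (hence of $\bQ$) and in the pairing on $\bY$, and it is precisely this matching that produces the cancellation $\sum_{i\in I}\lambda_i(x_i-\bar{x})=0$. Had we instead used the unweighted inner product of $\bX$, the projector onto $\bD$ would be the uniform average rather than the $\lambda$-weighted one (compare the identity $P_{\bD}=m^{-1}\sum_{j\in I}\bR^{j}$ used in the proof of Theorem~\ref{t:M}\ref{t:M7+}).
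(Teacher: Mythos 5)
Your proof is correct, but note that the paper does not actually prove this statement: it is recorded as a \emph{Fact}, with the proof outsourced to the citation \cite[Proposition~28.13]{BC2011}. So there is no internal argument to compare against, and your verification fills that gap in a self-contained way. Your route is the standard variational one: $\bD$ is a closed subspace of $\bY$ (closedness is unaffected by passing to the weighted norm, since all $\lambda_i>0$ and the index set is finite, so the norms of $\bX$ and $\bY$ are equivalent); $\bQ\bx\in\bD$ by construction; and $\bx-\bQ\bx\perp\bD$ in the weighted inner product, where the key cancellation $\sum_{i\in I}\lambda_i(x_i-\bar{x})=0$ uses exactly the normalization $\sum_{i\in I}\lambda_i=1$ from \eqref{e:weights}. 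The uniqueness part of the projection characterization then gives $P_{\bD}=\bQ$. What the citation buys the paper is brevity; what your argument buys is transparency about \emph{why} the weights must match: the same coefficients $(\lambda_i)_{i\in I}$ appear in $\bQ$ and in the inner product of $\bY$, and this is precisely the point the paper returns to in Theorem~\ref{t:willow}, which shows that in the unweighted space $\bX$ the operator $\bQ$ is not even nonexpansive unless $\lambda_i\equiv 1/m$. Your parenthetical remark comparing with $P_{\bD}=m^{-1}\sum_{j\in I}\bR^{j}$ in $\bX$ is consistent with that theorem and with the proof of Theorem~\ref{t:M}\ref{t:M7+}.
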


\begin{corollary}
\label{c:Q}
In the Hilbert product space $\bY$, 
the operator $\bQ$ is firmly nonexpansive and strongly
nonexpansive.
Furthermore, $\Fix\bQ=\bD\neq\varnothing$,
$\bzero\in\ran(\bId-\bQ)$, and $\bQ$ is asymptotically regular. 
\end{corollary}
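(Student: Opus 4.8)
The plan is to observe that every assertion follows almost immediately from Fact~\ref{f:Q}, which identifies $\bQ$ with the metric projector $P_{\bD}$ onto the diagonal subspace $\bD$ in the rescaled Hilbert product space $\bY$. Since $\bD$ is a nonempty closed (linear) subspace of $\bY$, the projector $P_{\bD}$ is well defined and, as recalled in the introduction, firmly nonexpansive. Hence $\bQ = P_{\bD}$ is firmly nonexpansive, and then strongly nonexpansive by Fact~\ref{f:B-R}\ref{f:B-R0}.

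Next I would compute the fixed point set. Because $\bQ = P_{\bD}$ and the set of fixed points of the projector onto a nonempty closed convex set is that set itself, we obtain $\Fix\bQ = \bD$; moreover $\bzero\in\bD$ (take $x=0$ in the definition of $\bD$), so $\bD\neq\varnothing$. Since $\bzero$ is thus a fixed point of $\bQ$, we have $(\bId-\bQ)\bzero = \bzero$, which gives $\bzero\in\ran(\bId-\bQ)$.

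Finally, asymptotic regularity follows in two ways. Most directly, $\bQ$ is linear and idempotent as a projector onto a subspace, so $\bQ^n = \bQ$ for every $n\geq 1$ and therefore $\bQ^n\bx-\bQ^{n+1}\bx = \bzero$ for all $\bx\in\bY$, trivially. Alternatively, and more in the spirit of the preceding development, $\bQ$ is strongly nonexpansive with $\bzero\in\ran(\bId-\bQ)\subseteq\overline{\ran(\bId-\bQ)}$, so Corollary~\ref{c:pizza} applies. There is no genuine obstacle here; the only point worth emphasizing is that one must work in the rescaled product space $\bY$ rather than in $\bX$, since it is precisely the weighting by $(\lambda_i)_{i\in I}$ that makes the averaging operator $\bQ$ coincide with the orthogonal projector $P_{\bD}$, and this identification is what drives the entire argument.
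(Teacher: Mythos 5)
Your proof is correct and follows essentially the same route as the paper: identify $\bQ$ with $P_{\bD}$ via Fact~\ref{f:Q}, invoke Fact~\ref{f:B-R}\ref{f:B-R0} for strong nonexpansiveness, note the fixed-point and range facts, and conclude asymptotic regularity from Corollary~\ref{c:pizza}. The extra details you supply (the explicit verification that $\bzero\in\ran(\bId-\bQ)$, and the alternative idempotence argument $\bQ^n=\bQ$) are fine but not a different method.
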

\begin{proof}
By Fact~\ref{f:Q}, the operator $\bQ$ is equal to the 
projector $P_\bD$ and
hence firmly nonexpansive. Now apply Fact~\ref{f:B-R}\ref{f:B-R0}
to deduce that $\bQ$ is strongly nonexpansive.
It is clear that
$\Fix\bQ=\bD$ and that $\bzero\in\ran(\bId-\bQ)$. 
Finally, recall Corollary~\ref{c:pizza} to see that $\bQ$ is
asymptotically regular. 
\end{proof}

\begin{proposition}
\label{p:T}
In the Hilbert product space $\bY$, the operator $\bT$ is firmly
nonexpansive. 
\end{proposition}
\begin{proof}
Since each $T_i$ is firmly nonexpansive,
we have $(\forall \bx=(x_i)_{i\in I}\in\bY)
(\forall \by=(y_i)_{i\in I}\in\bY)$
$\|T_ix_i-T_iy_i\|^2 \leq \scal{x_i-y_i}{T_ix_i-T_iy_i}$
$\Rightarrow$ 
$\|\bT\bx-\bT\by\|^2 = 
\sum_{i\in I}\lambda_i \|T_ix_i-T_iy_i\|^2 \leq \sum_{i\in
I}\lambda_i\scal{x_i-y_i}{T_ix_i-T_iy_i} =
\scal{\bx-\by}{\bT\bx-\bT\by}$.
\end{proof}

\begin{theorem}
\label{t:gurke}
Suppose that $(\forall i\in I)$
$0\in\overline{\ran(\Id-T_i)}$. 
Then the following hold in the Hilbert product space $\bY$.
\begin{enumerate}
\item 
\label{t:gurke1}
$\bzero \in \overline{\ran(\bId-\bT)}$
\item 
\label{t:gurke2}
$\bT$ is asymptotically regular. 
\item 
\label{t:gurke3}
$\bQ\circ\bT$ is asymptotically regular. 
\end{enumerate}
\end{theorem}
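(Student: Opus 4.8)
The plan is to prove the three parts of Theorem~\ref{t:gurke} in sequence, working throughout in the scaled Hilbert product space $\bY$, and leveraging the machinery already assembled for the composition case. The key conceptual point is that $\bY$ is genuinely a Hilbert space (with the weighted inner product), so every abstract result cited earlier---firm nonexpansiveness implying strong nonexpansiveness (Fact~\ref{f:B-R}), the characterization of asymptotic regularity via $0\in\overline{\ran(\Id-S)}$ (Corollary~\ref{c:pizza}), and the Bruck--Reich theory---applies verbatim with $X$ replaced by $\bY$. I would emphasize this at the outset so that the parts become short.

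For part~\ref{t:gurke1}, I would argue that $\bzero\in\overline{\ran(\bId-\bT)}$ follows from the componentwise hypothesis. Since each $T_i$ satisfies $0\in\overline{\ran(\Id-T_i)}$, for any $\varepsilon>0$ and each $i\in I$ there is $x_i\in X$ with $\|x_i-T_ix_i\|\leq\varepsilon$; assembling $\bx=(x_i)_{i\in I}$ gives
\begin{equation}
\|\bx-\bT\bx\|^2=\sum_{i\in I}\lambda_i\|x_i-T_ix_i\|^2\leq\varepsilon^2\sum_{i\in I}\lambda_i=\varepsilon^2,
\end{equation}
so $\bzero\in\overline{\ran(\bId-\bT)}$. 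Note this is simpler than the composition case precisely because $\bT$ acts diagonally---no shift operator or Brezis--Haraux argument is needed here.

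For part~\ref{t:gurke2}, I would combine part~\ref{t:gurke1} with the fact that $\bT$ is strongly nonexpansive in $\bY$: Proposition~\ref{p:T} establishes that $\bT$ is firmly nonexpansive in $\bY$, hence strongly nonexpansive by Fact~\ref{f:B-R}\ref{f:B-R0}, and then Corollary~\ref{c:pizza} (applied in the Hilbert space $\bY$) turns the range condition from part~\ref{t:gurke1} directly into asymptotic regularity of $\bT$.

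Part~\ref{t:gurke3} is where I expect the only real subtlety, and I would treat it as the main step. The operator $\bQ\circ\bT$ is a composition of two strongly nonexpansive operators on $\bY$: $\bT$ is strongly nonexpansive by Proposition~\ref{p:T} and Fact~\ref{f:B-R}\ref{f:B-R0}, while $\bQ$ is strongly nonexpansive by Corollary~\ref{c:Q}. Hence by Fact~\ref{f:B-R}\ref{f:B-R1} the composition $\bQ\circ\bT$ is strongly nonexpansive. By Corollary~\ref{c:pizza} it then suffices to verify $\bzero\in\overline{\ran(\bId-\bQ\circ\bT)}$. The natural approach is to take the same approximate fixed points $\bx$ of $\bT$ produced in part~\ref{t:gurke2}---so $\bx-\bT\bx$ is small---and estimate
\begin{equation}
\|\bx-\bQ\bT\bx\|\leq\|\bx-\bQ\bx\|+\|\bQ\bx-\bQ\bT\bx\|\leq\|\bx-\bQ\bx\|+\|\bx-\bT\bx\|,
\end{equation}
using that $\bQ$ is nonexpansive. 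The term $\|\bx-\bT\bx\|$ is controlled, but $\|\bx-\bQ\bx\|$ need not be small for arbitrary approximate fixed points, so the obstacle is to choose the $\bx$ from part~\ref{t:gurke1} to lie approximately in $\bD=\Fix\bQ$. The cleanest resolution is to invoke the general principle that the approximate-fixed-point set (equivalently, $0\in\overline{\ran(\bId-S)}$) is preserved under composition of strongly nonexpansive maps sharing a common approximate fixed point, which is exactly what Fact~\ref{f:Bru}\ref{f:Bru2} and the asymptotic regularity of $\bQ\circ\bT$ deliver once strong nonexpansiveness is in hand; thus I would lean on Corollary~\ref{c:pizza} and Fact~\ref{f:B-R}\ref{f:B-R1} rather than a direct range estimate, reducing part~\ref{t:gurke3} to confirming that $\bQ\circ\bT$ has arbitrarily good approximate fixed points, which the diagonal structure guarantees.
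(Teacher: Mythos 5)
Your parts \ref{t:gurke1} and \ref{t:gurke2} are correct and essentially identical to the paper's proof: the identity $\|\bx-\bT\bx\|^2=\sum_{i\in I}\lambda_i\|x_i-T_ix_i\|^2$ gives \ref{t:gurke1}, and combining Proposition~\ref{p:T}, Fact~\ref{f:B-R}\ref{f:B-R0}, and Corollary~\ref{c:pizza} (applied in the Hilbert space $\bY$) gives \ref{t:gurke2}.

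Part \ref{t:gurke3}, however, contains a genuine gap. You correctly identify the obstacle---approximate fixed points of $\bT$ need not lie near $\bD=\Fix\bQ$---but your proposed resolution is circular: you invoke ``the asymptotic regularity of $\bQ\circ\bT$'' in order to deliver $\bzero\in\overline{\ran(\bId-\bQ\circ\bT)}$, yet the asymptotic regularity of $\bQ\circ\bT$ is precisely what is to be proved. Moreover, the ``general principle'' you appeal to presupposes that $\bT$ and $\bQ$ share approximate fixed points, which fails in general: take $X=\RR$, $m=2$, $T_1=P_{\{0\}}$, $T_2=P_{\{1\}}$. Each $T_i$ is firmly nonexpansive and asymptotically regular, but a common approximate fixed point of $T_1$ and $T_2$ would have to satisfy $|x|\leq\varepsilon$ and $|x-1|\leq\varepsilon$, impossible for $\varepsilon<1/2$; equivalently, no point of $\bD$ is an approximate fixed point of $\bT$. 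In this example the fixed point of $\bQ\circ\bT$ (namely $(\lambda_2,\lambda_2)$) is not close to any approximate fixed point of $\bT$, so no estimate that starts from approximate fixed points of $\bT$ and bounds $\|\bx-\bQ\bT\bx\|$ can succeed. The claim that the approximate-fixed-point property of $\bQ\circ\bT$ is ``guaranteed by the diagonal structure'' is exactly the nontrivial content you have not proved.

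The paper closes this step in one line by citing Theorem~\ref{t:Main}, applied in the Hilbert space $\bY$ with the two mappings $\bT$ and $\bQ$: both are firmly nonexpansive ($\bT$ by Proposition~\ref{p:T}, $\bQ$ by Corollary~\ref{c:Q}) and both are asymptotically regular ($\bT$ by part~\ref{t:gurke2}, $\bQ$ by Corollary~\ref{c:Q}), so their composition is asymptotically regular. Equivalently, in your reduction via Fact~\ref{f:B-R}\ref{f:B-R1} and Corollary~\ref{c:pizza}, the missing ingredient $\bzero\in\overline{\ran(\bId-\bQ\circ\bT)}$ is supplied by Corollary~\ref{c:poppadoms} (with $m=2$, in $\bY$), whose proof rests on the Brezis--Haraux range approximation result (Fact~\ref{f:B-H}) via Corollary~\ref{c:M}. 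That machinery is what the composition result requires; once it is invoked, the rest of your argument does finish the proof.
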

\begin{proof}
\ref{t:gurke1}:
This follows because
$(\forall\bx=(x_i)_{i\in I})$
$\|\bx-\bT\bx\|^2 = \sum_{i\in I}\lambda_i\|x_i-T_ix_i\|^2$.

\ref{t:gurke2}:
Combine Fact~\ref{f:B-R}\ref{f:B-R0} with Corollary~\ref{c:pizza}. 

\ref{t:gurke3}:
On the one hand, 
$\bQ$ is firmly nonexpansive and asymptotically regular
by Corollary~\ref{c:Q}.
On the other hand,
$\bT$ is firmly nonexpansive and asymptotically regular
by Proposition~\ref{p:T} and \ref{t:gurke2}. 
Altogether, the result follows from
Theorem~\ref{t:Main}. 
\end{proof}

We are now ready for our second main result.

\begin{theorem}
\label{t:haupt}
Suppose that each $T_i$ is asymptotically regular. 
Then $\sum_{i\in I}\lambda_iT_i$ is asymptotically regular as well.
\end{theorem}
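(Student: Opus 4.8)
The plan is to derive Theorem~\ref{t:haupt} from the asymptotic regularity of the operator $\bQ\circ\bT$ on the scaled product space $\bY$, which is already available from Theorem~\ref{t:gurke}\ref{t:gurke3}. First I would check that the hypothesis of Theorem~\ref{t:gurke} is met: since each $T_i$ is assumed asymptotically regular, the discussion preceding Corollary~\ref{c:pizza} yields $0\in\overline{\ran(\Id-T_i)}$ for every $i\in I$. Hence Theorem~\ref{t:gurke}\ref{t:gurke3} applies, and $\bQ\circ\bT$ is asymptotically regular in $\bY$.

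The central observation is that the convex combination $S := \sum_{i\in I}\lambda_i T_i$ is precisely the restriction of $\bQ\circ\bT$ to the diagonal $\bD$, under the identification of $\bD$ with $X$ via $x\mapsto (x)_{i\in I}$. Indeed, for a diagonal point $\bx=(x)_{i\in I}$ we have $\bT\bx=(T_ix)_{i\in I}$, and then, by Fact~\ref{f:Q} (so that $\bQ=P_\bD$ forms the $\lambda$-weighted average), $(\bQ\circ\bT)\bx = \big(\sum_{i\in I}\lambda_i T_i x\big)_{i\in I} = (Sx)_{i\in I}$. In particular $\bQ\circ\bT$ maps $\bX$ into $\bD$, so the diagonal is invariant; an easy induction on $n$ then gives $(\bQ\circ\bT)^n(x)_{i\in I} = (S^n x)_{i\in I}$ for all $n\in\NN$ and all $x\in X$.

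Finally I would transfer asymptotic regularity across this identification by a one-line norm computation. For $x\in X$,
\[
\big\|(\bQ\circ\bT)^n(x)_{i\in I} - (\bQ\circ\bT)^{n+1}(x)_{i\in I}\big\|_{\bY}^2
= \sum_{i\in I}\lambda_i\,\|S^n x - S^{n+1}x\|^2
= \|S^n x - S^{n+1}x\|^2,
\]
since $\sum_{i\in I}\lambda_i = 1$. As the left-hand side tends to $0$ by the asymptotic regularity of $\bQ\circ\bT$, we conclude that $S^n x - S^{n+1}x\to 0$ for every $x\in X$, that is, $S=\sum_{i\in I}\lambda_i T_i$ is asymptotically regular.

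There is no serious obstacle here once the product-space machinery is in place; the only point requiring care is the bookkeeping showing that $\bQ\circ\bT$ leaves $\bD$ invariant and acts there exactly as $S$, which is what licenses both the induction and the final norm identity. The conceptual work has already been done in isolating the composition $\bQ\circ\bT$ and establishing its asymptotic regularity in the suitably scaled space $\bY$.
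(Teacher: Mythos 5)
Your proof is correct and follows essentially the same route as the paper: apply Theorem~\ref{t:gurke}\ref{t:gurke3} to get asymptotic regularity of $\bQ\circ\bT$ in $\bY$, observe that the diagonal is invariant and that $\bQ\circ\bT$ acts there exactly as $S=\sum_{i\in I}\lambda_iT_i$, and transfer the conclusion back to $X$. Your explicit check that asymptotic regularity of each $T_i$ yields $0\in\overline{\ran(\Id-T_i)}$ (so that Theorem~\ref{t:gurke} applies) is a detail the paper leaves implicit, and is a welcome addition.
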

\begin{proof}
Set $S:=\sum_{i\in I}\lambda_iT_i$.
Fix $x_0\in X$ and set $(\forall\nnn)$
$x_{n+1} = Sx_n$.
Set $\bx_0 = (x_0)_{i\in I}\in X^m$
and $(\forall\nnn)$  $\bx_{n+1}=(\bQ\circ\bT)\bx_n$. 
Then $(\forall\nnn)$ $\bx_n = (x_n)_{i\in I}$. 
Now $\bQ\circ\bT$ is asymptotically regular by
Theorem~\ref{t:gurke}\ref{t:gurke3}; hence,
$\bx_n-\bx_{n+1}=(x_n-x_{n+1})_{i\in I}\to \bzero$. 
Thus, $x_n-x_{n+1}\to 0$ and therefore $S$ is asymptotically regular. 
\end{proof}

\begin{remark}
Theorem~\ref{t:haupt} extends 
\cite[Theorem~4.11]{BMWnear} from
Euclidean to Hilbert space.
One may also prove Theorem~\ref{t:haupt} along
the lines of the paper \cite{BMWnear}; however, 
that route takes longer. 
\end{remark}

\begin{remark}
Similarly to Remark~\ref{r:optimal}, one cannot deduce that
if each $T_i$ has fixed points, then $\sum_{i\in I}\lambda_i T_i$
has fixed points as well: indeed, consider the setting described in
Remark~\ref{r:optimal} for an example. 
\end{remark}

We conclude this paper by showing that
we truly had to work in $\bY$ and not in $\bX$;
indeed, viewed in $\bX$, the operator $\bQ$ is generally
not even nonexpansive.

\begin{theorem}
\label{t:willow}
Suppose that $X\neq\{0\}$. 
Then the following are equivalent 
in the Hilbert product space $\bX$. 
\begin{enumerate}
\item
\label{t:willow1}
$(\forall i\in I)$ $\lambda_i = 1/m$.
\item
\label{t:willow2}
$\bQ$ coincides with the projector $P_{\bD}$. 
\item
\label{t:willow3}
$\bQ$ is firmly nonexpansive.
\item
\label{t:willow4}
$\bQ$ is nonexpansive.
\end{enumerate}
\end{theorem}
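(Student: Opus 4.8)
The plan is to close the cycle of implications
\ref{t:willow1}$\Rightarrow$\ref{t:willow2}$\Rightarrow$\ref{t:willow3}$\Rightarrow$\ref{t:willow4}$\Rightarrow$\ref{t:willow1}, the last arrow being the only one with any content. For \ref{t:willow1}$\Rightarrow$\ref{t:willow2} I would recall that in the \emph{standard} product space $\bX$ the projector onto the diagonal is the averaging operator, $P_{\bD}\bx = \big(m^{-1}\sum_{i\in I}x_i\big)_{i\in I}$ (this is exactly the formula $P_{\bD}=m^{-1}\sum_{i\in I}\bR^i$ used in the proof of Theorem~\ref{t:M}\ref{t:M7+}, and it also follows from minimizing $\sum_{i\in I}\|x-x_i\|^2$ over $x\in X$); when every $\lambda_i=1/m$ this coincides with $\bQ\bx=(\bar x)_{i\in I}$. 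The implication \ref{t:willow2}$\Rightarrow$\ref{t:willow3} is immediate, since every projector onto a nonempty closed convex set is firmly nonexpansive (as recalled in the introduction), and \ref{t:willow3}$\Rightarrow$\ref{t:willow4} holds because firm nonexpansiveness trivially implies nonexpansiveness.

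The substance of the theorem is \ref{t:willow4}$\Rightarrow$\ref{t:willow1}, and this is where I expect the real work. Since $\bQ$ is linear, nonexpansiveness in $\bX$ is the operator bound $\|\bQ\bx\|^2\leq\|\bx\|^2$ for all $\bx$. Expanding in the standard inner product gives
\begin{equation*}
\|\bQ\bx\|^2 = \sum_{i\in I}\Big\|\sum_{j\in I}\lambda_j x_j\Big\|^2
= m\Big\|\sum_{i\in I}\lambda_i x_i\Big\|^2,
\qquad
\|\bx\|^2 = \sum_{i\in I}\|x_i\|^2 .
\end{equation*}
Because $X\neq\{0\}$, I would fix a unit vector $e\in X$ and test nonexpansiveness on the single vector $\bx=(\lambda_i e)_{i\in I}$. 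Then $\sum_{i\in I}\lambda_i x_i = \big(\sum_{i\in I}\lambda_i^2\big)e$, so $\|\bQ\bx\|^2 = m\big(\sum_{i\in I}\lambda_i^2\big)^2$ while $\|\bx\|^2 = \sum_{i\in I}\lambda_i^2$.

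Finally I would read off the conclusion. The test vector forces $m\big(\sum_{i\in I}\lambda_i^2\big)^2\leq\sum_{i\in I}\lambda_i^2$, and dividing by the strictly positive quantity $\sum_{i\in I}\lambda_i^2$ yields $m\sum_{i\in I}\lambda_i^2\leq 1$. On the other hand, the Cauchy--Schwarz (equivalently QM--AM) inequality combined with $\sum_{i\in I}\lambda_i=1$ gives the reverse bound $m\sum_{i\in I}\lambda_i^2\geq\big(\sum_{i\in I}\lambda_i\big)^2=1$, with equality precisely when all the $\lambda_i$ coincide. The two inequalities together force $\sum_{i\in I}\lambda_i^2=1/m$ and hence $\lambda_i=1/m$ for every $i\in I$, which is \ref{t:willow1}. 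The main obstacle is therefore not a delicate estimate but choosing the right probe: once one tests nonexpansiveness against the weight vector itself, the sharp constant $m\sum_{i\in I}\lambda_i^2$ is pinned between the upper bound coming from \ref{t:willow4} and the universal lower bound $1$, and equality of these two is exactly the characterization of uniform weights.
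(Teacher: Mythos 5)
Your proposal is correct and follows essentially the same route as the paper: the first three implications are handled as routine facts, and the substantive step (iv)$\Rightarrow$(i) uses the identical test vector $\bx=(\lambda_i e)_{i\in I}$, the resulting bound $m\big(\sum_{i\in I}\lambda_i^2\big)^2\leq\sum_{i\in I}\lambda_i^2$, and the Cauchy--Schwarz lower bound $m\sum_{i\in I}\lambda_i^2\geq 1$ with its equality case to force $\lambda_i=1/m$.
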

\begin{proof}
``\ref{t:willow1}$\Rightarrow$\ref{t:willow2}'': 
\cite[Proposition~25.4(iii)]{BC2011}. 
``\ref{t:willow2}$\Rightarrow$\ref{t:willow3}'': 
Clear. 
``\ref{t:willow3}$\Rightarrow$\ref{t:willow4}'': 
Clear. 
``\ref{t:willow4}$\Rightarrow$\ref{t:willow1}'': 
Take $e\in X$ such that $\|e\|=1$.
Set $\bx := (\lambda_i e)_{i\in I}$ and 
$y := \sum_{i\in I}\lambda_i^2e$. 
Then $\bQ\bx = (y)_{i\in I}$. 
We compute
$\|\bQ\bx\|^2 = m\|y\|^2 = m\big(\sum_{i\in I}\lambda_i^2\big)^2$ 
and 
$\|\bx\|^2 = \sum_{i\in I}\lambda_i^2$. 
Since $\bQ$ is nonexpansive,
we must have that $\|\bQ\bx\|^2\leq \|\bx\|^2$, which is equivalent to 
\begin{equation}
m\Big(\sum_{i\in I}\lambda_i^2\Big)^2 \leq \sum_{i\in I}\lambda_i^2
\end{equation}
and to 
\begin{equation}
\label{e:1211a}
m\sum_{i\in I}\lambda_i^2 \leq 1.
\end{equation}
On the other hand, applying the Cauchy-Schwarz inequality
to the vectors $(\lambda_i)_{i\in I}$ and $(1)_{i\in I}$ in $\RR^m$
yields
\begin{equation}
\label{e:1211b}
1 = 1^2 = \Big(\sum_{i\in I}\lambda_i\cdot 1\Big)^2
\leq \big\|(\lambda_i)_{i\in I}\big\|^2
\big\|(1)_{i\in I}\big\|^2 = m\sum_{i\in I}\lambda_i^2. 
\end{equation}
In view of \eqref{e:1211a}, the Cauchy-Schwarz inequality \eqref{e:1211b}
is actually an equality which implies that $(\lambda_i)_{i\in I}$ is
a multiple of $(1)_{i\in I}$. We deduce that $(\forall i\in I)$
$\lambda_i = 1/m$. 
\end{proof}

\section*{Acknowledgments}
Part of this research was initiated during
a research visit of VMM at the Kelowna campus of UBC in Fall 2009. 
HHB was partially supported by the Natural Sciences and
Engineering Research Council of Canada and by the Canada Research Chair
Program.
VMM was partially supported by 
DGES, Grant BFM2009-1096-C02-01 and Junta de Andalucia, Grant FQM-127.
SMM was partially supported 
by the Natural Sciences and Engineering Research Council
of Canada.
XW was partially supported 
by the Natural Sciences and Engineering Research Council
of Canada.

\small

\end{document}